\newcommand{\del}{\partial}
\renewcommand{\vec}[1]{\geovec{#1}}
\numberwithin{equation}{section}
\begin{document}

\title{Reduced relative entropy techniques for a priori analysis of
  multiphase problems in elastodynamics
\thanks{J.G. was partially supported by the German Research Foundation
    (DFG) via SFB TRR 75 `Tropfendynamische Prozesse unter extremen
    Umgebungsbedingungen'. T.P. was partially supported by the EPSRC grant EP/H024018/1 and an LMS travel grant 41214.}
}


\author{Jan Giesselmann
  \and
  Tristan Pryer
}


\institute{Jan Giesselmann  \at
  Institute of Applied Analysis and
    Numerical Simulation, University of Stuttgart,
    Pfaffenwaldring 57, D-70563 Stuttgart, Germany.
    {\tt{jan.giesselmann@mathematik.uni-stuttgart.de}}
      \and
      Tristan Pryer \at
      Department of Mathematics and
      Statistics, Whiteknights, PO Box 220,
      Reading, GB-RG6 6AX, England UK. 
      {\tt{T.Pryer@reading.ac.uk}}
}

\date{\today}

\maketitle

\begin{abstract}
  We give an a priori analysis of a semi-discrete discontinuous
  Galerkin scheme approximating solutions to a model of multiphase
  elastodynamics which involves an energy density depending not only
  on the strain but also the strain gradient. A key component in the
  analysis is the \emph{reduced relative entropy} stability framework
  developed in [Giesselmann 2014]. We prove optimal bounds for the strain and the velocity in
  an appropriate norm.
  \keywords{discontinuous Galerkin finite element method, a priori error
  analysis, multiphase elastodynamics, relative entropy, reduced
  relative entropy.}
  \subclass{65M60, 65M12, 65M15, 74B20.}
\end{abstract}

\section{Introduction}

Our goal in this work is to introduce the \emph{reduced relative
  entropy} technique as a methodology for deriving a priori error
estimates to finite element approximations of a problem arising in
elastodynamics. In particular, this work is concerned with providing a
rigorous a priori error estimate for a semi (spatially) discrete
discontinuous Galerkin scheme approximating the solution of a
multiphase problem in nonlinear elasticity.  We consider a model for
shearing motions in an elastic bar undergoing phase transitions
between phases corresponding to different (intervals of shear)
strains.  The model is based on the equations of nonlinear
elastodynamics with a non-convex energy density regularized by an
additional (quadratic) dependence of the energy density on the strain
gradient.  Such models are frequently called ''second (deformation)
gradient`` models \cite{JTB02}.  It should be noted that (due to the
non-convexity of the energy) it is not immediately obvious what an
appropriate stability theory is.  A possible answer to this question
was given in \cite{Gie} where a modification of the relative entropy
approach was presented, which uses the higher order regularizing terms
in order to compensate for the non-convexity of the energy.

The relative entropy framework for hyperbolic conservation laws
endowed with a convex entropy was introduced in \cite{Daf79,DiP79}.
For systems of conservation laws describing (thermo)-mechanical
processes the notion of (mathematical) entropy follows from the
physical one \cite{Daf10}.  The generalization of the relative entropy
techniques to entropies which are quasi or polyconvex is by now
standard and is discussed in detail in \cite{Daf10}.  It should be
noted, however, that the model considered in this study does not fall
into this framework which requires us to build our analysis around the
stability framework from \cite{Gie}.

Our analysis is based on deriving a space discrete version of the
modified relative entropy framework from \cite{Gie}.  This enables us
to derive an estimate for the difference of solutions to our
semi-discrete scheme and a perturbed version thereof.  We combine this
stability framework with appropriate projection operators which enable
us to show that the exact solution satisfies a perturbed version of
the numerical scheme.

In order to be more precise let us introduce the equations under consideration:
In one space dimension the equations of nonlinear elasticity read
\begin{equation}\label{firstorder}
 \begin{split}
 \del_t u - \del_x v &=0\\
  \del_t v - \del_x W'(u)&=0,
 \end{split}
\end{equation}
where $u$ is the strain, $v$ is the velocity and $W=W(u)$ is the
energy density given by a constitutive relation.  They can also be
cast as a nonlinear wave equation for the deformation field $y$
satisfying $\del_x y=u:$
\[
 \del_{tt} y - \del_x(W'(\del_x y)) =0.
\]
A priori estimates for continuous finite element and dG schemes approximating the wave equation 
can be found in \cite{Mak93,KM05,OS07}.
For \eqref{firstorder} to describe multiphase behaviour the energy
density $W$ needs to be non-convex which makes \eqref{firstorder} a
problem of mixed hyperbolic-elliptic type.  This leads to many
problems concerning e.g. uniqueness of solutions to
\eqref{firstorder}.  To overcome the difficulties caused by the
hyperbolic-elliptic structure either a kinetic relation \cite{AK91,LeF02} needs to be introduced, or regularizations of
\eqref{firstorder} need to be considered.  We will study the numerical
approximation of systems arising from the second approach. In
particular, we will study the following regularized problem which was
considered by many authors \cite[e.g.]{ERV13,LT02,AB82,HL00,Sle84,Sle83}:
\begin{equation}\label{thorder}
 \begin{split}
  \del_t u - \del_x v &=0\\
  \del_t v - \del_x W'(u)&=\mu \del_{xx}v - \gamma \del_{xxx} u,
 \end{split}
\end{equation}
where $\mu\geq0,\, \gamma>0$ are parameters which scale the strength
of viscous and capillary effects.  It should be noted that
\eqref{thorder} is a physically meaningful model in itself, which also
can be written in wave equation form
\begin{equation}\label{eq:wave}
 \del_{tt} y -\del_x(W'(\del_x y)) = \mu \del_{xxt} y - \gamma \del_{xxxx} y.
\end{equation}

The numerical simulation of the model at hand and similar models, like
the Navier-Stokes-Korteweg system, has received some attention in
recent years \cite[e.g.]{CL01,BP13,Die07,JTB02,GiesselmannMakridakisPryer:2014,TXKV14}. Indeed it
turned out that stability of numerical solutions is not easy to
obtain. In \cite{TXKV14} an a priori analysis is carried out under the assuption that $W$ is linear.
 We are interested in the case that $\gamma$ is small. In this
case it is expected that solutions of \eqref{thorder} display thin
layers at phase boundaries. Thus, we advocate the use of
discontinuous Galerkin (dG) finite element methods. 

The remainder of the paper is organized as follows: After giving some
basic definitions we study well-posedness of \eqref{thorder} and its
associated energy in \S\ref{sec:re}. In \S\ref{sec:ns} we define the
semi-discrete dG scheme and describe some immediate properties of the
involved (discrete) operators.  In \S\ref{sec:dre} we derive a
discrete version of the reduced relative entropy framework and derive
a stability estimate for solutions of a perturbed version of the
numerical scheme.  \S\ref{sec:proj} is devoted to the
construction of projection operators.  The aim is to show that the
projection of the exact solution of \eqref{thorder} is a solution to a
perturbed version of our dG scheme.  In order to derive the projection
operators we need to study the gradient operators used in the dG
scheme in more detail. We combine the results of the
preceding sections in \S\ref{sec:ee} in order to derive an
error estimate for our dG scheme. Finally in \S\ref{sec:num} we conduct some numerical benchmarking experiments.

\section{Preliminaries, well-posedness and relative entropy}
\label{sec:re}

Given the standard Lebesgue space notation
\cite{Ciarlet:1978,Evans:1998} we begin by introducing the Sobolev
spaces. Let $\W \subset \reals$ then
\begin{gather}
  \sob{k}{p}(\W)
  := 
  \ensemble{\phi\in\leb{p}(\W)}
  {\D^{\alpha}\phi\in\leb{p}(\W), \text{ for } \norm{\alpha}\leq k},
\end{gather}
which are equipped with norms and seminorms
\begin{gather}
  \Norm{u}_{\sob{k}{p}(\W)}
  := 
  \begin{cases}
    \qp{\sum_{\norm{\alpha}\leq k}\Norm{\D^{\alpha} u}_{\leb{p}(\W)}^p}^{1/p} &\text{ if } p \in [1,\infty)
    \\
    \sum_{\norm{\alpha}\leq k}\Norm{\D^{\alpha} u}_{\leb{\infty}(\W)} &\text{ if } p = \infty 
  \end{cases}
  \\
  \norm{u}_{\sob{k}{p}(\W)}
  :=
  \Norm{\D^k u}_{\leb{p}(\W)} 
\end{gather}
respectively, where derivatives $\D^{\alpha}$ are understood in a
weak sense. 

We also make use of the following notation for time dependent Sobolev
(Bochner) spaces:
\begin{gather}
  \cont{i}(0,T; \sobh{k}(S^1))
  :=
  \ensemble{u : [0,T] \to \sobh{k}(S^1)}
           {u \text{ and $i$ temporal derivatives are continuous}},
           \\
           \leb{\infty}(0,T; \sob{k}{p}(\W))
  :=
           \ensemble{u : [0,T] \to \sob{k}{p}(\W)}
                    {\esssup_{t\in [0,T]} \Norm{u(t)}_{\sob{k}{p}(\W)} < \infty}.
\end{gather}
We define $\sobh{k}(\W):= \sob{k}{2}(\W).$ For any function space
 the subspace of functions with vanishing mean is denoted by subscript $m.$

We complement \eqref{thorder} with periodic boundary conditions. To
make this obvious in the notation we consider \eqref{thorder} on
$[0,T) \times S^1$ for some $T>0$ where $S^1$ denotes the flat circle,
\ie the interval $[0,1]$ with the endpoints being identified with each
other.  We also need an initial condition $u(0,\cdot)=u_0$ for some
$u_0 : S^1 \rightarrow \mathbb{R}$ whose regularity we will specify
later.

We assume $W \in \cont{3}(\rR, [0,\infty))$ but make no assumption on
the convexity of $W$. The standard application we have in mind is that
$W$ has a multi-well shape.

The well-posedness of \eqref{thorder} can be ensured using semi-group
theory:
\begin{proposition}[Well-posedness]\label{prop:thorder}
  Let $k \in \rN,\, k \geq 3$ and initial data $u_0 \in \sobh{k}(S^1),
  \, v_0 \in \sobh{k-1}(S^1)$ with $\int_{S^1}u_0 \d x= \int_{S^1} v_0
  \d x=0$ and $\mu,\gamma>0$ be given. Let $W \in \cont{k}(\rR).$
  Then, there exists some $T>0$ such that the problem \eqref{thorder}
  has a unique strong solution $(u,v)$ satisfying
\[\begin{split} u & \in \cont{0}([0,T],\sobh{k}(S^1))\cap  \cont{1}((0,T),\sobh{k-2}(S^1)) \\
   v & \in \cont{0}([0,T],\sobh{k-1}(S^1))\cap  \cont{1}((0,T),\sobh{k-3}(S^1))
  \end{split}
 \]
with $\int_{S^1}u(t,\cdot) \d x= \int_{S^1} v(t,\cdot) \d x=0$ for all $0 \leq t \leq T.$
\end{proposition}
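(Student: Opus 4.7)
The plan is to cast \eqref{thorder} as a semilinear abstract Cauchy problem on the Hilbert space $X := \sobh{k}(S^1) \times \sobh{k-1}(S^1)$, restricted throughout to the closed subspace of pairs with zero spatial mean. Writing $U = (u,v)^\top$, the equations take the form $\del_t U = L U + N(U)$ with
\[
L U = \begin{pmatrix} \del_x v \\ -\gamma \del_{xxx} u + \mu \del_{xx} v \end{pmatrix}, \qquad N(U) = \begin{pmatrix} 0 \\ \del_x W'(u) \end{pmatrix}.
\]
If $L$ generates a $C^0$ semigroup on $X$ and $N : X \to X$ is locally Lipschitz, then standard semilinear existence theory (in the spirit of Pazy) delivers a unique mild solution $U \in \cont{0}([0,T], X)$ on a short time interval.

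The first property is most transparent via Fourier series on $S^1$. Since we work in the mean-zero subspace, only modes $n \neq 0$ appear, and on each such mode $L$ acts by the $2\times 2$ symbol
\[
\widehat{L}(n) = \begin{pmatrix} 0 & 2\pi i n \\ i\gamma(2\pi n)^3 & -\mu(2\pi n)^2 \end{pmatrix},
\]
whose eigenvalues are $\tfrac12(2\pi n)^2\bigl(-\mu \pm \sqrt{\mu^2 - 4\gamma}\bigr)$. By Vieta, their sum equals $-\mu(2\pi n)^2<0$ and their product equals $\gamma(2\pi n)^4>0$, so both eigenvalues have strictly negative real part for every $n \neq 0$. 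Hence the Fourier multiplier $e^{tL}$ is uniformly bounded on each $\sobh{s}(S^1)$ and strongly continuous in $t$, which furnishes the required $C^0$ semigroup on $X$. For the nonlinear part, in one space dimension with $k \geq 3$ the embedding $\sobh{k}(S^1) \hookrightarrow \cont{k-1}(S^1)$ combined with the Sobolev algebra property and $W \in \cont{k}(\rR)$ yields, via the Moser composition lemma, the local Lipschitz estimate
\[
\Norm{W'(u_1) - W'(u_2)}_{\sobh{k}(S^1)} \leq C\bigl(\Norm{u_1}_{\sobh{k}},\Norm{u_2}_{\sobh{k}}\bigr)\,\Norm{u_1 - u_2}_{\sobh{k}(S^1)},
\]
and one derivative loss gives $\del_x W'(u) \in \sobh{k-1}(S^1)$ with the analogous bound, so $N$ is locally Lipschitz into $X$.

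Mean preservation of $u$ and $v$ is immediate on integrating the two equations of \eqref{thorder} over $S^1$ and invoking periodicity, so the dynamics stay in the mean-zero subspace. The additional temporal regularity follows by a one-step bootstrap: reading $\del_t u = \del_x v$ from the first equation together with $v \in \cont{0}([0,T],\sobh{k-1})$ gives $\del_t u \in \cont{0}((0,T),\sobh{k-2})$, and reading $\del_t v = \del_x W'(u) + \mu \del_{xx} v - \gamma \del_{xxx} u$ from the second with $(u,v) \in \cont{0}\times\cont{0}(\sobh{k}\times\sobh{k-1})$ gives $\del_t v \in \cont{0}((0,T),\sobh{k-3})$. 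The step I expect to be technically most delicate is the composition/Lipschitz estimate for $u\mapsto W'(u)$ in $\sobh{k}$: it is classical but it is where the hypothesis $k\geq 3$ and the full strength of $W \in \cont{k}$ really enter, and the existence argument hinges on its availability with constants depending only on $\sobh{k}$ norms of the arguments.
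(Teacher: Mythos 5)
Your overall strategy (a $C^0$ semigroup for the linear part plus a locally Lipschitz perturbation, then a Pazy-type semilinear existence theorem) is in the same spirit as the paper's, but the way you split the equation leaves a genuine gap. You need $N(U)=(0,\del_x W'(u))$ to be a locally Lipschitz map of $X=\sobh{k}(S^1)\times\sobh{k-1}(S^1)$ into itself, and you claim that $W\in\cont{k}(\rR)$ gives $\Norm{W'(u_1)-W'(u_2)}_{\sobh{k}(S^1)}\leq C\Norm{u_1-u_2}_{\sobh{k}(S^1)}$ via Moser. This is exactly where the argument does not close under the stated hypothesis: since $W''\in\cont{k-2}$ only, $\del_x W'(u)=W''(u)\del_x u$ lies in $\sobh{k-2}$, one derivative short of the $\sobh{k-1}$ required for $N:X\to X$ to be well defined, and the Lipschitz form of the composition estimate at regularity $\sobh{k}$ requires $W'\in\cont{k+1}$, i.e.\ roughly $W\in\cont{k+2}$. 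Your scheme therefore proves the proposition only under a strictly stronger smoothness assumption on $W$. The paper avoids precisely this by inducting on $k$: starting from the known $k=3$ solution it differentiates the wave equation \eqref{eq:wave} in $x$ to get \eqref{newwave} for $z_1=\del_x y$, in which $W''(\del_x y)$ is a \emph{given} coefficient supplied by the previous step; the perturbation $f(z)=(0,\del_x(W''(\del_x y)\del_x z_1))$ is then linear in the new unknown, acts on the fixed low-regularity energy space $Y=\sobh{2}_m(S^1)\times\leb{2}(S^1)$ rather than on $\sobh{k}\times\sobh{k-1}$, and consumes exactly $k$ derivatives of $W$.

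Two further points. First, the step ``both eigenvalues of $\widehat L(n)$ have negative real part, hence $e^{tL}$ is uniformly bounded'' is not a valid inference as stated: the symbol matrices are non-normal and their eigenvector condition numbers grow like $|n|$, so spectral information alone does not give a bound uniform in $n$. The conclusion is true, but only because the $\sobh{k}\times\sobh{k-1}$ pairing weights the two components by one relative power of $n$; after the corresponding diagonal conjugation $\widehat L(n)$ becomes $(2\pi n)^2 M$ for a fixed Hurwitz matrix $M$, or, equivalently, one verifies contractivity directly in the energy norm $\gamma\Norm{\del_x^{k}u}_{\leb{2}}^2+\Norm{\del_x^{k-1}v}_{\leb{2}}^2$, which is what the paper does (for $k=2$) in showing its semigroup is contractive. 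Second, with initial data $(u_0,v_0)\in X$ but not in $D(L)$ (which is essentially $\sobh{k+2}\times\sobh{k+1}$), the abstract theory yields only a mild solution, whereas your time-regularity bootstrap implicitly assumes the equation holds pointwise in $t$; in the paper's setup this issue does not arise because the data of the differentiated problem do lie in the domain of the relevant operator.
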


In case $k=3$ the solution exists for arbitrary times $T>0$. This,
indeed, relies on the compatibility of the model with the second law
of thermodynamics, \ie the following energy dissipation equality which
is well-known.
\begin{lemma}[Energy balance for \eqref{thorder}]
  Let $T,\,\gamma >0$ and $\mu\geq 0$ be given and let
  \begin{equation}
    \begin{split}
      \qp{u,v}
      \in 
      &\qp{
        \cont{0}([0,T],\sobh{3}(S^1))\cap
        \cont{1}((0,T),\sobh{1}(S^1))
      }
      \times
      \qp{
        \cont{0}([0,T],\sobh{2}(S^1))\cap
        \cont{1}((0,T),\leb{2}(S^1))
      }
    \end{split}
  \end{equation}
  be a strong solution of \eqref{thorder}. Then, the following energy
  balance law holds in $(0,T) \times S^1:$
  \begin{equation}\label{to_energy}
    \begin{split}
      0 &= \pd {t} 
      {
        \qp{
          W(u) + \frac{\gamma}{2} 
          \qp{ \pd x u}^2 + \frac{1}{2} v^2
        }
      }
      -
      \pd x {
        \qp{ v W'(u) - \gamma v  \pd {xx} u
          + \gamma \pd x v \pd x u + \mu v \pd x v 
        } 
      }
      + \mu (\pd x v)^2.
    \end{split}
  \end{equation}
\end{lemma}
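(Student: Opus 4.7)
The plan is to carry out the standard energy calculation: multiply the momentum equation in \eqref{thorder} by the velocity $v$, multiply the strain equation by $W'(u)$, add the two, and then repeatedly apply the product rule until every term has been identified as either a time derivative of an energy density, a spatial derivative of a flux, or the nonnegative dissipation $\mu(\partial_x v)^2$.

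More concretely, I would proceed in four steps. First, testing the momentum equation $\partial_t v - \partial_x W'(u) = \mu \partial_{xx} v - \gamma \partial_{xxx} u$ against $v$ gives
\begin{equation*}
\partial_t\bigl(\tfrac12 v^2\bigr) = v\,\partial_x W'(u) + \mu v\, \partial_{xx} v - \gamma v\, \partial_{xxx} u.
\end{equation*}
Second, multiplying the strain equation $\partial_t u = \partial_x v$ by $W'(u)$ yields $\partial_t W(u) = W'(u)\partial_x v$, so the two $W'$ terms combine via the product rule into $\partial_x(v W'(u))$. Third, the viscous term is handled by the identity $\mu v\, \partial_{xx}v = \mu\partial_x(v\, \partial_x v) - \mu(\partial_x v)^2$. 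Fourth, for the capillary term I integrate by parts twice,
\begin{equation*}
-\gamma v\, \partial_{xxx}u = -\partial_x(\gamma v\, \partial_{xx} u) + \gamma \partial_x v\,\partial_{xx} u = -\partial_x(\gamma v\, \partial_{xx} u) + \gamma \partial_x(\partial_x v\,\partial_x u) - \gamma \partial_{xx} v \,\partial_x u,
\end{equation*}
and then invoke the compatibility identity $\partial_{xx} v = \partial_{tx} u$ (obtained by spatially differentiating the strain equation) to rewrite $\gamma \partial_{xx} v\,\partial_x u = \partial_t\bigl(\tfrac{\gamma}{2}(\partial_x u)^2\bigr)$. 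Assembling these four ingredients and collecting time derivatives on the left and flux divergences in the middle produces exactly \eqref{to_energy}.

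The only technical point worth watching is to justify that these pointwise manipulations are legitimate for the stated regularity on $(u,v)$. The one-dimensional Sobolev embedding $\sobh{1}(S^1)\hookrightarrow \cont{0}(S^1)$ places $v$, $\partial_x v$, $\partial_x u$ and $\partial_{xx} u$ in $\leb{\infty}$ uniformly in time, so every product appearing in the derivation (for instance $v\, \partial_{xx}v$, $v\, \partial_{xxx} u$ and $\partial_x v\, \partial_{xx} u$) lies in $\leb{1}_{\mathrm{loc}}$, and \eqref{to_energy} should be read as an identity in $\mathcal{D}'((0,T)\times S^1)$. The chain rule $\partial_t W(u) = W'(u)\partial_t u$ is admissible because $W\in \cont{3}(\rR)$ and $\partial_t u \in \leb{2}$, while the commutation $\partial_{tx}u = \partial_{xt}u$ follows from $u\in \cont{1}((0,T),\sobh{1}(S^1))$. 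I do not anticipate any substantive obstacle; the derivation is an ODE-style manipulation in disguise, and the only care required is to keep track of which factor in each product carries the lowest regularity.
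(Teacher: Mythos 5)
Your derivation is correct and complete: the four ingredients (testing the momentum equation with $v$, the chain rule $\partial_t W(u)=W'(u)\partial_x v$ via the strain equation, the product-rule identity for the viscous term, and the double integration by parts of $-\gamma v\,\partial_{xxx}u$ combined with $\partial_{xx}v=\partial_{xt}u$) assemble exactly into \eqref{to_energy}, and your regularity remarks correctly identify why each product is well defined under the stated hypotheses. The paper itself offers no proof of this lemma --- it is cited as a well-known identity --- and what you have written is precisely the standard argument that is being tacitly invoked, so there is nothing to compare beyond confirming that your computation checks out.
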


\begin{proof}{of Proposition \ref{prop:thorder}.}
  The result for $k=3$ can be found in \cite{Gie}. We will show the
  result for $k=4$, the generalization to $k \geq 5$ is
  straightforward.  Note that by forming the $x$-derivative of
  \eqref{eq:wave} we obtain the following equation for $u=\pd x y$
  \begin{equation}\label{newwave} 
    \del_{tt} u - \del_x (W''(\del_x y) \del_x u)
    =
    \mu \del_{xxt} u - \gamma \del_{xxxx} u
  \end{equation}
  where $\del_x y$ is considered to be already given (from the result
  for $k=3$).  With $z= \Transpose{ (u, \del_t u)}$ this can be cast in
  abstract form as
  \begin{equation}
    \label{abst}
    \del_t z = A z +f(z) 
    \
    \text{ with }
    A = \begin{pmatrix}
      0 & \operatorname{Id} 
      \\
      -\gamma \del_{xxxx} & \mu \del_{xx}
    \end{pmatrix}
    \quad f(z) = \begin{pmatrix}
      0 \\ \del_x(W''(\del_x y) \del_x z_1)
    \end{pmatrix}.
  \end{equation}
  Let us define the spaces 
  \begin{equation}
    X:=  \sobh{2}_m(S^1) , \quad Y:= X \times L^2(S^1).
  \end{equation}
  For every $w \in X$ it holds that $\del_x w \in \sobh{1}_m(S^1)$
  such that, by Poincar\'{e}'s inequality,
  \begin{equation}
    \left\langle \begin{pmatrix}
        z_1\\ z_2
      \end{pmatrix}, 
      \begin{pmatrix}
        \tilde z_1\\ \tilde z_2
      \end{pmatrix}\right\rangle_Y := \int_{S^1} \gamma \del_{xx} (z_1) \del_{xx} (\tilde z_1) + z_2 \tilde z_2 \d x, \quad 
    \left\| \begin{pmatrix} z_1\\ z_2\end{pmatrix}\right\|_Y^2:=  \left\langle \begin{pmatrix}
        z_1\\ z_2
      \end{pmatrix}, 
      \begin{pmatrix}
        z_1\\ z_2
      \end{pmatrix}\right\rangle_Y
  \end{equation}
  define a scalar product and a norm on $Y.$ The operator $A$ is
  densely defined on $Y$ with
  \begin{equation}
    D(A) = \big(\sobh{4}(S^1) \cap X\big) \times \sobh{2}(S^1).
  \end{equation}
  The operator $A$ induces a $C^0$ semi-group on $Y$ which can be seen
  analogously to the arguments in \cite{AB82} using $\{ \sin{2n \pi
    \cdot}, \cos{2n \pi \cdot} \, : \, n \in \rN\}$ as a basis of $X$.
  Note that for all $t \geq 0$ it holds that
  \[ \int_{S^1} u(t,\cdot) \d x =0, \quad \int_{S^1} \del_x u(t,\cdot)
  \d x =0, \quad \int_{S^1} \del_t u (t,\cdot) \d x =0, \] due to our
  assumptions on the initial data and the fact that the wave equation
  \eqref{newwave} can be recast as conservation laws for $\del_x u,
  \del_t u$.  The semi-group induced by $A$ is, in fact, contractive
  as any solution $(z_1,z_2)$ of
  \begin{equation}
    \pd t {\begin{pmatrix} z_1\\ z_2 \end{pmatrix}} = A \begin{pmatrix} z_1\\ z_2 \end{pmatrix}
  \end{equation}
  satisfies
  \begin{equation}
    \begin{split}
      \frac{d}{dt} \left\| 
        \begin{pmatrix}
          z_1\\ z_2 
        \end{pmatrix}
      \right\|^2_Y 
      &=
      2\int_{S^1} \gamma \del_{xx} z_1 \del_{xxt} z_1 + z_2 \del_t z_2 \d x
      \\
      &=
      2\int_{S^1} \gamma \del_{xx} z_1 \del_{xxt} z_1- \gamma
      \del_{xxxx} z_1 \del_t z_1 + \mu z_2 \del_{xx} z_2 \d x 
      \\
      &= -
      2\int_{S^1} \mu (\del_x z_2)^2 \d x \leq 0 .
    \end{split}
  \end{equation}
  Moreover, the map $f : Y \rightarrow Y$ is locally Lipschitz
  continuous, as estimates for $\Norm{y}_{\sobh{2}(S^1)}$ are already
  known from the result for $k=3$.  Invoking \cite[Thm. 5.8]{Pav87} we
  infer that it exists a maximal time of existence $T_{m} \in
  (0,\infty]$ and a unique strong solution $(z_1,z_2)$ of
  \eqref{newwave} with
  \begin{equation}
    \begin{split}
      z_1 &\in C^0([0,T_{m}),\sobh{4}_m(S^1)) \cap C^1 ((0,T_{m}),\sobh{2}_m(S^1)),\\
      z_2 &\in C^0([0,T_{m}),\sobh{2}_m(S^1)) \cap C^1((0,T_{m}),\leb{2}(S^1)).
    \end{split}
  \end{equation}
  Now that we have obtained $z_1$ we may define some $\tilde y$ as the
  primitive of $z_1$ with mean value zero.  It is straightforward to
  check, by integrating \eqref{newwave}, that $\tilde y$ indeed solves
  \eqref{eq:wave}. As the solution of \eqref{eq:wave} is unique we have
  $y = \tilde y$ which implies $z_1 = \del_x y.$ This induces the
  desired additional regularity of $y.$
  
  The equations for higher spatial derivatives of $y$ can be obtained
  analogously to \eqref{newwave} and the arguments can be modified
  in a straightforward fashion.
\end{proof}

\section{Semi-discrete dG scheme}
\label{sec:ns}
We consider the approximation of \eqref{thorder} by a semi-discrete
discontinuous Galerkin scheme. To define the scheme let us first
introduce some standard notation: Let $I:=[0,1]$ be the unit interval
and choose $0 = x_0 < x_1 < \dots < x_N = 1.$ We denote
$I_n=[x_n,x_{n+1}]$ to be the $n$--th subinterval and let $h_n:=
x_{n+1}-x_n$ be its size. By $\mathfrak{h}$ we denote the mesh-size function $S^1 \rightarrow [0,\infty).$, \ie $\mathfrak{h}|_{I_n}=h_n$ and $h:= \max h_n.$ For the purposes of this work, we will assume that $h N \leq C$ for some $C>0$.  For $q \geq 1$ let $\poly q(I)$ be the space
of polynomials of degree less than or equal to $q$ {on $I$}, then we
denote
\begin{equation}
  \fes_q
  :=
  \ensemble{g : I \to \rR }
  { g \vert _{I_n} \in \poly q{(I_n)} \text{ for } \ n =0,\dots, N-1},
\end{equation}
to be the usual space
of piecewise $q$--th order polynomials for functions
over $I$.
By 
\begin{equation}
  \fes_q^m
  :=
  \ensemble{g \in \fes_q}{ \int_{S^1} g \d x=0},
\end{equation}
we denote the subspace of functions with vanishing mean.
In addition we define jump and average operators by
\begin{equation}
  \begin{split}
    \jump{g}_n
    &:= 
    g(x_n^-) - g(x_n^+)
    := 
    \lim_{s \searrow 0} g(x_n-s) - \lim_{s \searrow 0}  g(x_n+s),
    \\
    \avg{g}_n
    &:
    = 
    \frac{1}{2} \qp{ g(x_n^-) +g(x_n^+)}
    :=
    \frac{1}{2} \qp{\lim_{s \searrow 0} g(x_n-s) + \lim_{s \searrow 0} g(x_n+s)}.
  \end{split}
\end{equation}
We will also denote the $\leb{2}$ projection operator from $L^2(S^1)$
to $\fes_q$ by $P_q$.

We will examine semi-discrete numerical schemes which are based on the
following reformulation of \eqref{thorder} using an auxiliary variable
$\tau$:
\begin{equation}
  \begin{split}
    \del_t   u - \del_x v&=0\\
    \del_t v - \del_x \tau - \mu \del_{xx}  v&=0 \\
    \tau -  W'(u) + \gamma \del_{xx}  u &=0.            
  \end{split}
\end{equation}
In the semi-discrete numerical scheme the quantities $ u_h, v_h \in
\cont{1}([0,T),\rV_q)$ and $ \tau_h \in \cont{0}([0,T),\rV_q)$ are
determined such that
\begin{equation}\label{sds}
  \begin{split}
    \int_{S^1} \del_t u_h \Phi - G^-[v_h]\Phi \d x 
    &=0 
    \quad \forall \ \Phi \in \rV_q, \\
    \int_{S^1}\del_t v_h \Psi- G^+[\tau_h]\Psi  + \mu G^-[v_h]G^-[\Psi]\d x
    &=0
    \quad \forall \ \Psi \in \rV_q, \\
    \int_{S^1}\tau_h Z - W'(u_h) Z \d x - \gamma a_h^d(u_h,Z)
    &=0
    \quad \forall \ Z \in \rV_q,
  \end{split}
\end{equation}
given the initial conditions $u_h(0,\cdot) =P_q [u_0],
v_h(0,\cdot)=P_q[v_0],$ where $P_q$ is the $\leb{2}$ projection
$\leb{2}(S^1) \rightarrow \fes_q.$ In \eqref{sds} $G^\pm: \rV_q
\rightarrow \rV_q$ denote discrete gradient operators and $a_h^d:
\rV_q \times \rV_q \rightarrow \rR$ is a symmetric, bilinear form
which is a consistent discretisation of the weak form of the
Laplacian.  We will describe our assumptions on $a_h^d$ below.  For
any $w \in \fes_q$ the discrete gradients $G^\pm[w]$ are defined by
\begin{equation}
  \label{dgrad}
  \int_{S^1} G^\pm[w]\Psi \d x 
  =
  \sum_{i=0}^{N-1} \int_{x_i}^{x_{i+1}} \del_x w \Psi \d x 
  -
  \sum_{i=0}^{N-1} \jump{w}_{i} \Psi(x_i^\pm)
  \quad 
  \forall \ \Psi \in \rV_q,
\end{equation}
where the periodic boundary conditions are accounted for by
$\jump{w}_0 := w(x_N^-)-w(w_0^+).$

In the sequel we will use the convention that $C > 0$ denotes a generic constant which may depend on $q$, the ratio of concurrent cell sizes, $\gamma$, $W$, but is independent of $\mathfrak h$ and the exact solution $\qp{u,v}$. We impose that the bilinear form $a_h^d$ is coercive and stable with respect to the
dG-norm, \ie there exists a $C>0$ such that for all $w,\tilde w \in
\rV_q$
\begin{equation}\label{a:ahd}
 \begin{split}
   a_h^d(w, \tilde w) &\leq C \| w\|_{\operatorname{dG}} \|\tilde  w\|_{\operatorname{dG}}, \\
   | w|_{\operatorname{dG}}^2 &\leq C  a_h^d(w, w) ,
 \end{split}
\end{equation}
where
\begin{equation}
 \begin{split}
  \norm{ w }_{\operatorname{dG}}^2 &:= \sum_{n=0}^{N-1} \qp{\| \del_x w\|_{\leb{2}(I_n)}^2 + \frac{2\qp{\jump{w}_n}^2}{h_{n-1} + h_n} },\\ 
   \Norm{ w }_{\operatorname{dG}}^2 &:=  \Norm{w}_{\leb{2}(S^1)}^2 + \norm{ w }_{\operatorname{dG}}^2.
 \end{split}
\end{equation}

A classical choice for $a_h^d$ satisfying \eqref{a:ahd} is the
interior penalty method
\begin{equation}
  \label{def:ip}
  a_h^d(w , \tilde w )
  := 
  \sum_{i=0}^{N-1} \Big( \int_{x_i}^{x_{i+1}} \del_x w \del_x \tilde w \d x 
  -
  \jump{w}_{i} \avg{\del_x \tilde w}_{i}
  -
  \jump{\tilde w}_{i} \avg{\del_x w}_{i} + \frac{\sigma}{h} \jump{w}_{i} \jump{\tilde w}_{i}\Big),
\end{equation}
\newcommand{\ritz}{\mathfrak P}
for some $\sigma \gg 1,$ and $\avg{\del_xw}_0:= \tfrac{1}{2} (
\del_xw(x_N^-)+\del_xw(x_0^+)).$ In addition, we need $a_h^d$ to
satisfy the following approximation property. For some $w \in
\sobh{2}(S^1)$ let $\ritz [w]$ be the Riesz projection of $w$ with respect
to $a_h^d$, \ie the unique function in $\rV_q$ satisfying
\begin{equation}\label{Riesz:def}
  a_h^d(\ritz [w],\Psi)
  =
  \int_{S^1} \del_{xx} w \Psi \d x 
  \quad 
  \forall \ \Psi \in \rV_q 
  \quad 
  \text{and}
  \quad
  \int_{S^1} \ritz [w] - w \d x =0.
 \end{equation}
 We impose on $a_h^d$ that for every $w \in \sobh{q+2}(S^1)$ we have
 \begin{equation}\label{Riesz:est}
   \begin{split}
     | w - \ritz [w] |_{\operatorname{dG}} &\leq C h^{q} \Norm{w}_{\sobh{q+1}(S^1)}
     \\
     \| w - \ritz [w] \|_{\leb{2}(S^1)} &\leq C h^{q+1}\Norm{w}_{\sobh{q+1}(S^1)}
     \\
     \Norm{\ritz [w]}_{\sob{1}{\infty}(S^1)} &\leq C \Norm{w}_{\sob{1}{\infty}(S^1)}.
\end{split}\end{equation}
These conditions are also satisfied by the interior penalty method
\eqref{def:ip}, see \cite[Cor. 4.18, Thm. 4.25]{dPE12} and
\cite[Thms. 5.1, 5.3]{CC05}.

Let us note some properties of the discrete gradient operators, which
follow from \cite[Prop. 4.4]{GiesselmannPryer:2014b} and by standard inverse and trace
inequalities
\begin{lemma}[Properties of discrete gradients]\label{prop:grad1}
  The discrete gradients $G^\pm$ have the following duality property:
  \begin{equation}\label{dual}
    \int_{S^1} G^+ [\Phi] \Psi \d x
    =
    - \int_{S^1} \Phi G^-[\Psi] \d x \Foreach \Phi, \Psi \in \rV_q.
  \end{equation}
  The discrete gradients $G^\pm$ have the following stability
  property: For all $q \in \rN$ there exists $C>0$ independent of $h$
  such that
  \begin{equation}
    \label{eq:inverse-Gs}
    \begin{split}
      \Norm{G^\pm [\Phi]}_{\leb{2}(S^1)} &\leq C\Norm{\mathfrak{h}^{-1}\Phi}_{\leb{2}(S^1)}\\
      \Norm{G^\pm [\Phi]}_{\leb{2}(S^1)} &\leq C\norm{\Phi}_{\operatorname{dG}}
    \end{split}
    \Foreach \Phi \in \rV_q.
  \end{equation}
\end{lemma}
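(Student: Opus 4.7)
The plan is to treat the duality identity and the two stability bounds separately: the duality is a careful discrete integration by parts, and the stability bounds both follow from testing the definition of $G^\pm$ against $G^\pm[\Phi]$ itself and invoking standard inverse/trace inequalities on $\rV_q$.

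For duality, I would start from the defining identity \eqref{dgrad} with $w=\Phi$:
\begin{equation*}
\int_{S^1} G^+[\Phi]\Psi \d x = \sum_{i=0}^{N-1} \int_{I_i} \del_x \Phi \, \Psi \d x - \sum_{i=0}^{N-1} \jump{\Phi}_{i}\Psi(x_i^+).
\end{equation*}
Performing cellwise integration by parts on $\int_{I_i}\del_x\Phi\,\Psi$ produces boundary contributions $\Phi(x_{i+1}^-)\Psi(x_{i+1}^-)-\Phi(x_i^+)\Psi(x_i^+)$. Summing over $i$ and exploiting periodicity (so that the $x_N$ contribution is identified with the $x_0$ contribution) turns these into $\sum_i [\Phi(x_i^-)\Psi(x_i^-)-\Phi(x_i^+)\Psi(x_i^+)] = \sum_i \jump{\Phi\Psi}_i$. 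Applying the elementary product-jump formula $\jump{\Phi\Psi}_i = \jump{\Phi}_i\avg{\Psi}_i + \avg{\Phi}_i\jump{\Psi}_i$, together with $\avg{\Psi}_i - \Psi(x_i^+)=\tfrac12\jump{\Psi}_i$ and $\Phi(x_i^-)=\avg{\Phi}_i+\tfrac12\jump{\Phi}_i$, collapses the boundary contributions to exactly $\sum_i [\avg{\Phi}_i + \tfrac12\jump{\Phi}_i]\jump{\Psi}_i$. Comparing with $-\int_{S^1}\Phi G^-[\Psi]\d x$ computed straight from \eqref{dgrad} yields \eqref{dual}.

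For the two stability estimates in \eqref{eq:inverse-Gs}, I would set $\Psi = G^\pm[\Phi]$ in \eqref{dgrad}, so that
\begin{equation*}
\Norm{G^\pm[\Phi]}_{\leb{2}(S^1)}^2 \leq \Norm{\del_x\Phi}_{\leb{2}(S^1)}\Norm{G^\pm[\Phi]}_{\leb{2}(S^1)} + \sum_{i=0}^{N-1}|\jump{\Phi}_i|\,|G^\pm[\Phi](x_i^\pm)|,
\end{equation*}
where the volume integral is understood elementwise. For the first bound, I would use the inverse estimate $\Norm{\del_x\Phi}_{\leb{2}(I_n)}\leq Ch_n^{-1}\Norm{\Phi}_{\leb{2}(I_n)}$ and a standard trace/inverse estimate $|\Phi(x_i^\pm)|\leq Ch_i^{-1/2}\Norm{\Phi}_{\leb{2}(I_i)}$ (and likewise for $G^\pm[\Phi]$), then apply discrete Cauchy--Schwarz, using the bounded ratio of adjacent cell sizes to reabsorb factors. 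This yields $\Norm{G^\pm[\Phi]}_{\leb{2}(S^1)}\leq C\Norm{\mathfrak{h}^{-1}\Phi}_{\leb{2}(S^1)}$. For the second bound, I would weight the jump sum by $\sqrt{(h_{i-1}+h_i)/2}$ and its reciprocal, so that one factor gives exactly $|\Phi|_{\operatorname{dG}}$ and the other factor, after the trace inequality $(h_{i-1}+h_i)|G^\pm[\Phi](x_i^\pm)|^2\leq C\Norm{G^\pm[\Phi]}_{\leb{2}(I_{i-1}\cup I_i)}^2$, contributes $\Norm{G^\pm[\Phi]}_{\leb{2}(S^1)}$. Dividing by $\Norm{G^\pm[\Phi]}_{\leb{2}(S^1)}$ then produces the claimed estimate.

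The only delicate point, and the one I would be most careful with, is the bookkeeping at the periodic endpoint in the duality argument: one must consistently use the convention $\jump{w}_0 := w(x_N^-)-w(x_0^+)$ when shifting indices so that the telescoping cancellation of the cellwise boundary terms genuinely produces the sum of jumps on the circle. The stability estimates are then routine once the discrete trace and inverse inequalities on $\rV_q$ are in place, and the bounded local quasi-uniformity assumption $hN\leq C$ (together with the generic constant $C$ allowed to depend on $q$ and the ratio of concurrent cell sizes) ensures that the finitely many neighbour-cell factors are harmless.
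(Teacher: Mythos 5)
Your proposal is correct and follows essentially the same route as the paper: the duality identity is obtained by cellwise integration by parts with the periodic jump convention (your product-jump bookkeeping is just a more explicit version of the paper's one-line telescoping), and the stability bounds are exactly the "standard inverse and trace inequalities" argument the paper invokes without detail, which you correctly supply by testing \eqref{dgrad} with $\Psi = G^\pm[\Phi]$ and weighting the jump sum appropriately.
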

\begin{proof}
  The proof of (\ref{dual}) follows immediately from the definition of
  $G^\pm[\cdot]$, indeed
  \begin{multline}
      \int_{S^1} G^+[\Psi] \Phi 
      =   
      \sum_{i=0}^{N-1} \int_{x_i}^{x_{i+1}} \del_x \Psi \Phi \d x 
      -
      \sum_{i=0}^{N-1} \jump{\Psi}_{i} \Phi(x_i^+)
      \\
      =
      - \sum_{i=0}^{N-1} \int_{x_i}^{x_{i+1}} \Psi \del_x \Phi \d x 
      +
      \sum_{i=0}^{N-1} {\Psi(x_i^-)} \jump{\Phi}_i
      = 
      -\int_{S^1} \Psi G^-[\Phi].
    \end{multline}
  The proof of (\ref{eq:inverse-Gs}) uses standard inverse inequalities.
\end{proof}

\begin{remark}[Discrete entropy inequality]\label{rem:ed}
  Using the test functions $\Phi=\tau_h$, $\Psi=v_h$ and $Z=\del_t
  u_h$ in \eqref{sds} and employing the duality \eqref{dual} it is
  straightforward to see that our semi-discrete scheme satisfies the
  following entropy dissipation equality for $0 < t <T$
  \[ \frac{\d }{\d t} \Big(\int_{S^1} W(u_h) + \frac{1}{2} v_h^2\d x +
  \frac{\gamma}{2} a_h^d (u_h,u_h) \Big)= -\mu \|
  G^-[v_h]\|^2_{\leb{2}(S^1)} .\] The reader may note that this is
  similar to the entropy dissipation equality obtained in the fully
  discrete case in \cite{GiesselmannMakridakisPryer:2014}.  However there are also differences:
  In \cite{GiesselmannMakridakisPryer:2014} the authors required the dissipative term to be
  coercive (with respect to the dG-norm) and ``central'' discrete
  gradients were used instead of the one sided versions $G^\pm$ here.
\end{remark}

\begin{remark}[$\leb{\infty}$ bound for $u_h.$]
  As the numerical scheme dissipates discrete energy, $a_h^d$ is
  coercive, see \eqref{a:ahd},  $(\rV_q ,
  \|\cdot\|_{\operatorname{dG}})$ is embedded in
  $(\leb{\infty}(S^1),\|\cdot\|_{\leb{\infty}})$ and the mean of $u_h$ is constant in time we observe that
  $\Norm{u_h}_{\leb{\infty}(0,T;\leb{\infty}(S^1))}$ is bounded in
  terms of the initial (discrete) energy.
\end{remark}

\begin{remark}[Choice of discrete operators]
  While the precise choices of ``surface energy'' and dissipation
  terms (on the discrete level) were somewhat arbitrary in
  \cite{GiesselmannMakridakisPryer:2014} this is not the case here.  Our analysis heavily relies
  on the fact that $a_h^d$ is coercive on $\rV_q^m$ in order to infer
  an error estimate from the relative entropy estimate Corollary
  \ref{Gron}.  We choose the same kind of gradient operators for
  discretising the viscous term in \eqref{sds} as for the gradient in
  the continuity equation in order to simplify the estimates for the
  residual $R_v$ in Proposition \ref{ee}.  Let us finally note that
  the roles of $G^+$ and $G^-$ in \eqref{sds} could be interchanged.
\end{remark}

\begin{lemma}[Stability of the $\leb{2}$ projection]
  The $P_q$ projection is stable with respect to the dG-seminorm.
\end{lemma}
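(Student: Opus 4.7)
The plan is to decompose $P_q w = \pi_h w + P_q(w - \pi_h w)$, where $\pi_h \colon \sobh{1}(S^1) \to \rV_q$ denotes the cellwise $\leb{2}$-projection onto polynomials of degree at most $q$ on each interval $I_n$. The triangle inequality then gives
\begin{equation*}
  \norm{P_q w}_{\operatorname{dG}} \leq \norm{\pi_h w}_{\operatorname{dG}} + \norm{P_q(w - \pi_h w)}_{\operatorname{dG}},
\end{equation*}
and the task reduces to bounding each term by $C\norm{w}_{\operatorname{dG}}$.

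For the first term I would verify the dG-stability of $\pi_h$ directly. The cellwise $\leb{2}$-projection satisfies the standard $\sobh{1}$-stability bound $\|\del_x \pi_h w\|_{\leb{2}(I_n)} \leq C \|\del_x w\|_{\leb{2}(I_n)}$, obtained from a polynomial inverse inequality together with the approximation estimate $\|w - \pi_h w\|_{\leb{2}(I_n)} \leq Ch_n\|\del_x w\|_{\leb{2}(I_n)}$ and the Poincar\'e inequality. For the jump contribution, since $w$ is single-valued at every node, $\jump{\pi_h w}_n = -\jump{w - \pi_h w}_n$, so the scaled trace inequality
\begin{equation*}
  |\phi(x_n^\pm)|^2 \leq C\bigl(h_n^{-1}\|\phi\|_{\leb{2}(I_n)}^2 + h_n\|\del_x \phi\|_{\leb{2}(I_n)}^2\bigr)
\end{equation*}
applied to $\phi = w - \pi_h w$ on the two adjacent cells yields $|\jump{\pi_h w}_n|^2 \leq C(h_{n-1} + h_n)\|\del_x w\|_{\leb{2}(I_{n-1} \cup I_n)}^2$, which is the correctly weighted bound when inserted into the jump term of the dG-seminorm.

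For the second term, $P_q(w-\pi_h w) \in \rV_q$, so I would apply the discrete inverse estimate
\begin{equation*}
  \norm{\Phi}_{\operatorname{dG}}^2 \leq C \|\mathfrak{h}^{-1}\Phi\|_{\leb{2}(S^1)}^2 \qquad \text{for all } \Phi \in \rV_q,
\end{equation*}
which follows by combining the standard polynomial inverse inequality with a trace inverse estimate on each cell. Exploiting the cellwise $\leb{2}$-stability $\|P_q \phi\|_{\leb{2}(I_n)} \leq \|\phi\|_{\leb{2}(I_n)}$ and then the approximation property of $\pi_h$ gives
\begin{equation*}
  \norm{P_q(w-\pi_h w)}_{\operatorname{dG}} \leq C\|\mathfrak{h}^{-1}(w-\pi_h w)\|_{\leb{2}(S^1)} \leq C\|\del_x w\|_{\leb{2}(S^1)} \leq C\norm{w}_{\operatorname{dG}}.
\end{equation*}

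The main technical obstacle is the dG-stability of $\pi_h$; the continuity of $w$ across each node is essential, since it lets one transfer the jump of $\pi_h w$ onto a jump of the approximation error $w - \pi_h w$, which is then controlled by trace and approximation estimates on adjacent cells. Assuming the locally quasi-uniform mesh hypothesis already invoked in the paper, these elementwise arguments assemble globally without difficulty.
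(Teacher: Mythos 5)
Your core mechanism coincides with the paper's: bound $\del_x(P_q w)$ on each cell by an inverse inequality applied to $P_q w$ minus the cell average together with the local approximation/Poincar\'e estimate, and bound the node jumps by transferring $\jump{P_q w}_n$ onto the projection error and using trace plus approximation estimates on the two adjacent cells. One cosmetic point first: since $\fes_q$ carries no inter-element continuity, the global $\leb{2}$ projection $P_q$ \emph{is} your cellwise projection $\pi_h$, so the term $P_q(w-\pi_h w)$ vanishes identically and the decomposition $P_q w = \pi_h w + P_q(w-\pi_h w)$ is vacuous. Nothing breaks, but the second half of your argument is a detour around zero.

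The substantive issue is your standing assumption that $w$ is single-valued at every node, which you describe as essential to the jump estimate. The lemma is invoked in the proof of Corollary \ref{Gron} for $w = W'(u_h) - W'(\tilde u_h)$ with $u_h, \tilde u_h \in \rV_q$, which is genuinely discontinuous across element boundaries; accordingly the paper's proof is carried out for $w$ in the broken space $\sobh{1}(\T{})$ and retains the jumps of $w$. The repair is immediate: replace $\jump{\pi_h w}_n = -\jump{w-\pi_h w}_n$ by the splitting $\jump{P_q w}_n^2 \leq 2\jump{P_q w - w}_n^2 + 2\jump{w}_n^2$ and absorb the second contribution into $\norm{w}_{\operatorname{dG}}^2$, which contains precisely the weighted node jumps of $w$. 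As written, however, your proof does not cover the case in which the lemma is actually applied.
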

\begin{proof}
  Arguing similarly to the proof of \cite[Lem 4.6]{GLV11} we have for
  any $w \in \sobh{1}(\T{})$
  \begin{equation}
    \begin{split}
      \norm{P_q[w]}_{dG}^2
      &=
      \sum_n \qp{ \int_{I_n} (\del_x(P_q w ))^2 \d x + \frac{\jump{P_q[ w]}_n^2}{h_{n-1}+h_n}}\\
      &\leq \sum_n \qp{ \int_{I_n} (\del_x(P_q [w] - P_0[w]) )^2 \d x +\frac{ 2\jump{P_q[ w]- w}_n^2 + 2 \jump{w}_n^2  }{h_{n-1}+h_n}  }\\
      &\leq \sum_n \Bigg( \int_{I_n} h_n^{-2}\qp{P_q [w] - P_0[w] }^2 \d x + 2\int_{I_n} \frac{\qp{P_q [w] - w }^2}{(h_{n-1}+h_n)^2}  \d x 
        + 2\frac{ \jump{w}_n^2 }{h_{n-1}+h_n}\Bigg)\\
      &\leq \sum_n \qp{3 \int_{I_n} (\del_x(w ))^2 \d x + 2\frac{\jump{w}_n^2}{h_{n-1}+h_n}   } \leq 3 \norm{w}_{dG}^2,
    \end{split}
  \end{equation}
  concluding the proof.
\end{proof}

We are now in position to prove the existence of solutions to
\eqref{sds} for arbitrary long times:
\begin{lemma}[Existence and uniqueness to the discrete scheme (\ref{sds})]
  For given initial data $u_h^0, v_h^0 \in \fes_q$ the ODE system
  \eqref{sds} has a unique solution $(u_h,v_h,\tau_h) \in
  \qp{\cont{1}((0,\infty),\fes_q)}^3.$
\end{lemma}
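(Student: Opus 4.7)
My plan is to reduce \eqref{sds} to a smooth system of ODEs on the finite-dimensional space $\fes_q \times \fes_q$, invoke Picard--Lindel\"of for local existence and uniqueness, and then use the discrete entropy identity of Remark~\ref{rem:ed} to rule out finite-time blow-up.

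The first step is to eliminate $\tau_h$. For fixed $u_h \in \fes_q$, the third equation of \eqref{sds} prescribes the $\leb{2}$-inner products of $\tau_h$ against all $Z \in \fes_q$ via a bounded linear functional; by Riesz representation on the finite-dimensional space $\fes_q$ this determines a unique $\tau_h = \tau_h(u_h) \in \fes_q$. Because $W \in \cont{3}(\rR)$ and $a_h^d$ is bilinear and bounded, the map $\fes_q \ni u_h \mapsto \tau_h(u_h) \in \fes_q$ is of class $\cont{2}$. Substituting back, the first two equations of \eqref{sds} take the closed form
\begin{equation*}
  \del_t u_h = G^-[v_h], \qquad \del_t v_h = G^+[\tau_h(u_h)] - \mu M v_h,
\end{equation*}
where $M : \fes_q \to \fes_q$ is the linear operator defined by $\int_{S^1} (Mw)\,\Psi \d x = \int_{S^1} G^-[w]\, G^-[\Psi] \d x$. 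Since the right-hand side is locally Lipschitz on the finite-dimensional space $\fes_q \times \fes_q$, Picard--Lindel\"of furnishes a unique maximal solution $(u_h, v_h) \in \cont{1}([0,T_{\max}),\fes_q)^2$ for some $T_{\max} \in (0,\infty]$, and $\tau_h(u_h(\cdot)) \in \cont{1}((0,T_{\max}),\fes_q)$ by composition.

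To upgrade to global existence I would apply the entropy identity of Remark~\ref{rem:ed} to this local solution, obtaining, for all $t \in [0,T_{\max})$,
\begin{equation*}
  \int_{S^1} W(u_h(t)) + \tfrac{1}{2} v_h(t)^2 \d x + \tfrac{\gamma}{2} a_h^d(u_h(t),u_h(t)) \leq E_0,
\end{equation*}
with $E_0$ the initial discrete energy. Since $W \geq 0$, this bounds $\Norm{v_h(t)}_{\leb{2}(S^1)}$ and $a_h^d(u_h(t),u_h(t))$ uniformly in $t$; the coercivity hypothesis \eqref{a:ahd} then yields a uniform bound on $\norm{u_h(t)}_{\operatorname{dG}}$. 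Testing the first equation of \eqref{sds} with $\Phi \equiv 1$ and noting that $G^-[v_h]$ has vanishing mean (a direct computation from \eqref{dgrad} under the periodic convention) shows that $\int_{S^1} u_h(t)\d x$ is conserved, so that the seminorm bound promotes to a full $\leb{2}$-bound on $u_h$. Because all norms on the finite-dimensional space $\fes_q$ are equivalent, this a priori bound prevents blow-up on any bounded time interval, and hence $T_{\max} = \infty$.

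The only mildly delicate ingredient I anticipate is the mean-conservation observation, which is needed to convert the coercivity of $a_h^d$ on the dG \emph{seminorm} into an $\leb{2}$ bound; everything else is standard finite-dimensional ODE theory combined with the already-established discrete entropy dissipation.
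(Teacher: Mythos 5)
Your proposal is correct and follows essentially the same route as the paper: eliminate $\tau_h$ via the (finite-dimensional) Riesz representation, reduce to a locally Lipschitz ODE on $\fes_q\times\fes_q$, apply Picard--Lindel\"of, and use the discrete entropy dissipation of Remark~\ref{rem:ed} together with coercivity of $a_h^d$ and conservation of the mean of $u_h$ to obtain an a priori bound precluding finite-time blow-up. The only cosmetic difference is in the continuation argument (you invoke the standard no-blow-up criterion, the paper argues by contradiction with a uniform minimal existence time on the energy sublevel set), which amounts to the same thing.
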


\begin{proof}
  To some $w_h \in \fes_q$ let $\Delta_h w_h$ denote the unique
  element of $\fes_q$ satisfying
  \[ a_h^d (w_h,\Phi) = - \int_{S^1} \Phi \Delta_h w_h \d x.\] Using
  this notation we may remove $\tau_h$ from \eqref{sds} and rewrite it
  is
  \begin{equation}\label{sds:rew}
    \begin{split}
      \int_{S^1} \del_t u_h \Phi - G^-[v_h]\Phi \d x 
      &=0 
      \quad \forall \ \Phi \in \rV_q, \\
      \int_{S^1}\del_t v_h \Psi- G^+\qb{P_q[W'(u_h)] - \gamma \Delta_h u_h} \Psi
      + 
      \mu G^-[v_h]G^-[\Psi]\d x 
      &=0
      \quad \forall \ \Psi \in \rV_q.
    \end{split}
  \end{equation}
  This can be written in more abstract form as
  \begin{equation}
    \label{ode} 
    z'(t) = f(z(t)),
  \end{equation}
  with
  \begin{equation}
    z:= \begin{pmatrix}
      u_h \\ v_h
    \end{pmatrix}
    \quad 
    f(z):= \begin{pmatrix}
      G^-[z_2]\\
      G^+\qb{P_q[W'(z_1)] - \gamma \Delta_h z_1} + \mu G^+[G^-[z_2]]
    \end{pmatrix}.
  \end{equation}
  Note that $f: (\fes_q)^2 \rightarrow (\fes_q)^2$ is continuous, due
  to inverse estimates and stability of projection operators.  As
  $\fes_q$ is finite dimensional we do not need to choose a norm on
  $\fes_q.$ From Remark \ref{rem:ed}, the coercivity of $a_h^d$
  \eqref{a:ahd} and the fact that the mean value of $u_h$ does not
  change over time we infer that $z(t)$ remains in some bounded set $
  K \subset (\fes_q)^2$ (depending on the initial data) as long as a
  classical solution exists.  Note that this conclusion does not require any growth assumptions
  on $W.$ Note also that $K$ can be chosen such that for any initial data $
  z^0 \in K$ solutions remain in $K.$ For any $z \in (\fes_q)^2$ we
  have that
  \[ \D f(z): (\fes_q)^2 \rightarrow (\fes_q)^2,\]
  with
  \[ \D f(z)(\tilde z) = 
  \begin{pmatrix}
    G^-[\tilde z_2]\\
    G^+[P_q[W''(z_1)\tilde z_1] - \gamma \Delta_h \tilde z_1] + \mu G^+[G^-[\tilde z_2]]
  \end{pmatrix}.
  \]
  Thus, the regularity of $W$ implies that $\D f(z) $ is a uniformly bounded
  operator for all $z \in K.$
  Thus, Picard-Lindel\"of's theorem implies that for any initial data
  $z^0 \in K$ there is a local solution to \eqref{sds} with a minimal
  time of existence bounded uniformly from below.

  Let us now assume that initial data $z^0 \in (\fes_q)^2$ are given
  and there is a maximal finite interval of existence $[0,T_m)$ with
  $T_m < \infty$ of the associated solution.  Let $K$ be the set of
  elements in $(\fes_q)^2$ with energy smaller or equal to the energy
  of the initial data.  Then the solution can be evaluated on an
  increasing sequence of times $(t_i)_{i \in \rN}$ with
  \[ t_i < t_{i+1} < T_m ,\ z(t_i) \in K \Foreach i, \ \lim_{i
    \rightarrow \infty} t_i =T_m.\] Then, there is some $i$ such that
  the difference between $T_m$ and $t_i$ is smaller that the minimal
  time of existence of solutions for \eqref{sds} with initial data in
  $K.$ Thus, we can extend the solution on $[0,T_m)$ by the solution
  with ``initial'' data $(t_i,z(t_i))$ which is a contradiction to the
  maximality of $T_m.$
\end{proof}

\section{The discrete relative entropy framework}\label{sec:dre}
The stability analysis of (nonlinear systems of) hyperbolic
conservation laws is based on the relative entropy framework, which
transfers the knowledge about the energy dissipation inequality into
estimates for differences of solutions. This framework cannot be used
here directly as $W$, and therefore the whole energy, is not
convex. It was shown in \cite{Gie}, however, that the higher order
regularization terms in \eqref{thorder} make it possible to consider
only part of the relative entropy and thereby obtain stability
results.  In this section we will employ the fact that our
semi-discrete scheme \eqref{sds} satisfies a discrete energy
inequality, see Remark \ref{rem:ed}, in order to obtain a discrete
version of the results in \cite{Gie}.

\begin{definition}[Discrete reduced relative entropy]
  For tuples $(u_h,v_h,\tau_h)$ and $(\tilde u_h,\tilde v_h,\tilde \tau_h)
  \in \cont{0}([0,T],\rV_q)^3 $ we define the reduced relative entropy
  between them as
  \begin{multline}
    \eta_R(t) 
    :=
    \frac{1}{2} \Norm{v_h(t,\cdot) - \tilde v_h(t,\cdot)}_{\leb{2}(S^1)}^2 
    +
    \frac{\gamma}{2} a_h^d(u_h(t,\cdot) - \tilde u_h(t,\cdot),u_h(t,\cdot) - \tilde u_h(t,\cdot)) \\+ \frac{\mu}{4} \int_0^t \qp{G^-[v_h (s,\cdot)- \tilde v_h(s,\cdot)]}^2 \d s.
  \end{multline}
\end{definition}

\begin{lemma}[Discrete reduced relative entropy rate]\label{lem:drrer}
  Let $(u_h,v_h,\tau_h)$ be a solution of \eqref{sds} and let
  \[(\tilde u_h,\tilde v_h,\tilde \tau_h) \in \cont{1}([0,T),\rV_q)
  \times \cont{1}([0,T),\rV_q)\times \cont{0}([0,T),\rV_q)\] be a
  solution of the following perturbed problem
  \begin{equation}\label{sds:pert}
    \begin{split}
      \int_{S^1} \del_t \tilde u_h \Phi - G^-[\tilde v_h]\Phi \d x &=\int_{S^1} R_u \Phi \d x \quad \forall \ \Phi \in \rV_q \\
      \int_{S^1}\del_t \tilde v_h \Psi- G^+[\tilde \tau_h]\Psi  + \mu G^-[\tilde v_h] G^-[\Psi]\d x &=\int_{S^1} R_v \Psi \d x \quad \forall \ \Psi \in \rV_q \\
      \int_{S^1}\tilde \tau_h Z - W'(\tilde u_h) Z \d x - \gamma a_h^d(\tilde u_h,Z) &=\int_{S^1} R_\tau Z \d x \quad \forall \ Z \in \rV_q ,
    \end{split}
  \end{equation}S
  for some $R_u,R_v,R_\tau \in \cont{0}([0,T),\rV_q).$ Then the rate
  (of change) of the discrete reduced relative entropy satisfies
  \begin{equation}\label{eq:rdrre}
    \begin{split}
      \frac{\d}{\d t} \eta_R  
      &=
      - \frac{3}{4} \mu \int_{S^1} G^-[v_h-\tilde v_h] G^-[v_h - \tilde v_h] \d x
      \\
      &\qquad - \int_{S^1}  R_v (v_h - \tilde v_h) + R_u (\tau_h - \tilde \tau_h)
      + (W'(u_h) -W'(\tilde u_h)) G^-[v_h - \tilde v_h]\d x
      \\
      &\qquad +
      \int_{S^1}  (W'(u_h) -W'(\tilde u_h))R_u + R_\tau G^-[v_h - \tilde v_h] - R_\tau R_u\d x.
  \end{split}
\end{equation}
\end{lemma}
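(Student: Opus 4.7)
The plan is to differentiate $\eta_R$ in time and then exploit the equations satisfied by the differences $u_h - \tilde u_h$, $v_h - \tilde v_h$ and $\tau_h - \tilde\tau_h$, obtained by subtracting \eqref{sds:pert} from \eqref{sds} line by line. Using the symmetry of $a_h^d$ and the fundamental theorem of calculus on the time-integrated viscous summand, the derivative reads
\begin{equation*}
\frac{\d}{\d t}\eta_R = \int_{S^1}(v_h - \tilde v_h)\del_t(v_h - \tilde v_h)\d x + \gamma a_h^d\qp{u_h - \tilde u_h,\del_t(u_h - \tilde u_h)} + \frac{\mu}{4}\int_{S^1}\qp{G^-[v_h - \tilde v_h]}^2\d x.
\end{equation*}
Each of the three summands is then rewritten by a single test-function choice in the corresponding subtracted equation.

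For the first summand I take $\Psi = v_h - \tilde v_h \in \rV_q$ in the subtracted $v$-equation and convert $G^+$ into $G^-$ via the duality \eqref{dual}; this produces $-\int_{S^1}(\tau_h - \tilde\tau_h)G^-[v_h - \tilde v_h]\d x$, the full viscous contribution $-\mu\int_{S^1}(G^-[v_h - \tilde v_h])^2\d x$, and $-\int_{S^1}R_v(v_h - \tilde v_h)\d x$. For the second summand I test the subtracted $\tau$-equation with $Z = \del_t(u_h - \tilde u_h) \in \rV_q$ to obtain
\begin{equation*}
\gamma a_h^d\qp{u_h - \tilde u_h,\del_t(u_h - \tilde u_h)} = \int_{S^1}\qb{(\tau_h - \tilde\tau_h) - (W'(u_h) - W'(\tilde u_h)) + R_\tau}\del_t(u_h - \tilde u_h)\d x,
\end{equation*}
so it remains to eliminate each occurrence of $\del_t(u_h - \tilde u_h)$ via the subtracted $u$-equation.

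The main technical obstacle is that $W'(u_h) - W'(\tilde u_h)$ does not belong to $\rV_q$ and hence cannot be inserted directly as a test function. I sidestep this by testing the $u$-equation with the projected function $\Phi = P_q\qb{W'(u_h) - W'(\tilde u_h)} \in \rV_q$. Because the other factors $\del_t(u_h - \tilde u_h)$, $G^-[v_h - \tilde v_h]$ and $R_u$ all lie in $\rV_q$, the projector $P_q$ can be peeled off in each of the resulting integrals by the defining property of the $\leb{2}$ projection. Testing the $u$-equation additionally with $\Phi = \tau_h - \tilde\tau_h$ and $\Phi = R_\tau$, both already in $\rV_q$, handles the two remaining pairings without any projection subtlety.

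Assembling the three expansions, the $(\tau_h - \tilde\tau_h)G^-[v_h - \tilde v_h]$ contributions arising from the $v$- and $\tau$-steps cancel against each other, while the two viscous pieces combine as $-\mu + \tfrac{\mu}{4} = -\tfrac{3\mu}{4}$ to form the first term on the right-hand side of \eqref{eq:rdrre}. The remaining contributions then match the claimed identity term-for-term, completing the derivation.
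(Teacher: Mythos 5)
Your proposal is correct and follows essentially the same route as the paper's proof: differentiate $\eta_R$, substitute via the test functions $\Psi = v_h-\tilde v_h$ and $Z=\del_t(u_h-\tilde u_h)$, cancel the $(\tau_h-\tilde\tau_h)$--$G^-[v_h-\tilde v_h]$ pairing using the duality \eqref{dual}, and eliminate $\del_t(u_h-\tilde u_h)$ through the $u$-equation with the $\leb{2}$-projected test function $P_q[W'(u_h)-W'(\tilde u_h)]$ (the paper merely combines this with $-R_\tau$ into a single test function rather than treating them separately). The sign bookkeeping for the residuals and the $-\mu+\tfrac{\mu}{4}=-\tfrac{3\mu}{4}$ combination are both handled correctly.
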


\begin{remark}[Impact of different residuals]
  If we consider applying Gronwall's Lemma to \eqref{eq:rdrre} we
  observe that the residual $R_u$ is more problematic than
  $R_v,R_\tau$ as it is multiplied by $\tau_h - \tau_h$ which is not
  controlled by the reduced relative entropy. While it is possible to
  replace this term using \eqref{sds}$_3$ and \eqref{sds:pert}$_3$
  this would in turn introduce a term $a_h^d(u_h - \tilde u_h,R_u)$,
  which includes derivatives of $R_u$.  Therefore, our projections in
  Section \ref{sec:ee} will be constructed such that $R_u=0$. The
  discrete relative entropy rate in this case is considered in more
  detail in the subsequent corollary.
\end{remark}

\begin{corollary}[Estimate of reduced relative entropy]\label{Gron}
  Let the conditions of Lemma \ref{lem:drrer} be satisfied with
  $R_u=0.$ Let $\tilde u_h$ be bounded in $\leb{\infty}(0,T;\sob{1}{\infty}(
  S^1))$ and satisfy
  \begin{equation}\label{eq:mean}
    \int_{S^1} u_h(0,\cdot) - \tilde u_h(0,\cdot) \d x =0.      
  \end{equation}
  Then, there exists a constant $C>0$ depending only on $\gamma,T,
  u_0, v_0, \Norm{\tilde
    u_h}_{\leb{\infty}(0,T;\sob{1}{\infty}(S^1))}$ such that for $0 \leq
  t \leq T$
  \[   \frac{\d}{\d t}  \eta_R(t)  \leq C \eta_R(t) + C \int_{S^1} R_v^2(t,\cdot) + \frac{1}{\mathfrak{h}^2}R_{\tau}^2(t,\cdot) \d x.\]
  Therefore, Gronwall's Lemma implies (for $0 \leq t \leq T$)
  \begin{equation}\label{Gron2}
    \eta_R(t) 
    \leq
    \Big( \eta_R(0)  + C\|R_v\|_{\leb{2}([0,t]\times S^1) }^2 
    +
    {C} \|\mathfrak{h}^{-1}R_\tau\|_{\leb{2}([0,t]\times S^1) }^2\Big) \exp(Ct).
  \end{equation}
\end{corollary}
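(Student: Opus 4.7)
The plan is to start from the rate formula \eqref{eq:rdrre} with $R_u=0$, which leaves three mixed terms besides the dissipation $-\tfrac{3}{4}\mu\|G^-[v_h-\tilde v_h]\|_{\leb{2}(S^1)}^2$: the $R_v$-term, the $W'$-difference term, and the $R_\tau$-term. The strategy is to bound each by a sum of a multiple of $\eta_R$, a small piece of $\mu\|G^-[v_h-\tilde v_h]\|_{\leb{2}(S^1)}^2$ that can be absorbed into the dissipation, and one of the residual squares appearing on the right-hand side of the claim. Gronwall's lemma then delivers \eqref{Gron2}.

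First, I would observe that mean-zero of $u_h-\tilde u_h$ is preserved in time: taking $\Phi\equiv 1$ in \eqref{sds}$_1$ and \eqref{sds:pert}$_1$ (with $R_u=0$) and using that $\int_{S^1}G^-[v_h]\d x=0$ (immediate from the definition \eqref{dgrad} with $\Psi\equiv 1$ and periodic cancellation of the boundary contributions), we see that the mean of $u_h$ and $\tilde u_h$ are each conserved, so the hypothesis \eqref{eq:mean} propagates. Hence $u_h(t,\cdot)-\tilde u_h(t,\cdot)\in\rV_q^m$ for all $t$, and a standard discrete Poincaré inequality combined with the coercivity in \eqref{a:ahd} gives
\begin{equation*}
\Norm{u_h-\tilde u_h}_{\leb{2}(S^1)}^2 \leq C \norm{u_h-\tilde u_h}_{\operatorname{dG}}^2 \leq C a_h^d(u_h-\tilde u_h,u_h-\tilde u_h) \leq \tfrac{2C}{\gamma}\eta_R.
\end{equation*}

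Next, I would bound the three terms. For $-\int R_v(v_h-\tilde v_h)\d x$ Cauchy--Schwarz and Young give $\leq\tfrac12\|R_v\|_{\leb{2}(S^1)}^2+\tfrac12\|v_h-\tilde v_h\|_{\leb{2}(S^1)}^2$, and the second term is controlled by $\eta_R$. For the $W'$-difference term, the a priori $\leb\infty$ bound on $u_h$ following from the discrete energy dissipation (Remark, cf.\ \ref{rem:ed}) together with the assumed bound on $\tilde u_h$ yield, via $W\in\cont{3}$, a Lipschitz estimate $|W'(u_h)-W'(\tilde u_h)|\leq C|u_h-\tilde u_h|$ with $C$ depending on $u_0,v_0,\gamma,\|\tilde u_h\|_{L^\infty(0,T;W^{1,\infty})}$; then Cauchy--Schwarz and Young give
\begin{equation*}
\left|\int_{S^1}(W'(u_h)-W'(\tilde u_h))G^-[v_h-\tilde v_h]\d x\right|
\leq \tfrac{\mu}{8}\Norm{G^-[v_h-\tilde v_h]}_{\leb{2}(S^1)}^2 + \tfrac{C}{\mu}\Norm{u_h-\tilde u_h}_{\leb{2}(S^1)}^2,
\end{equation*}
and the last factor is absorbed into $C\eta_R$ by the Poincaré step above.

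The main obstacle is the $R_\tau$-term, because naive Cauchy--Schwarz on $\int R_\tau G^-[v_h-\tilde v_h]\d x$ produces $\|R_\tau\|_{\leb{2}(S^1)}^2$ and not the weighted norm $\|\mathfrak{h}^{-1}R_\tau\|_{\leb{2}(S^1)}^2$ that actually appears in the statement. The trick is to use the duality \eqref{dual} from Lemma \ref{prop:grad1} to move the gradient onto $R_\tau$:
\begin{equation*}
\int_{S^1} R_\tau\, G^-[v_h-\tilde v_h]\d x = -\int_{S^1} G^+[R_\tau]\,(v_h-\tilde v_h)\d x,
\end{equation*}
and then invoke the inverse bound $\|G^+[R_\tau]\|_{\leb{2}(S^1)}\leq C\|\mathfrak{h}^{-1}R_\tau\|_{\leb{2}(S^1)}$ from \eqref{eq:inverse-Gs}, followed by Cauchy--Schwarz and Young to produce $C\|\mathfrak{h}^{-1}R_\tau\|_{\leb{2}(S^1)}^2+C\|v_h-\tilde v_h\|_{\leb{2}(S^1)}^2$. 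Collecting everything, the two $\tfrac{\mu}{8}\|G^-[v_h-\tilde v_h]\|^2$ contributions are absorbed into the $-\tfrac{3}{4}\mu\|G^-[v_h-\tilde v_h]\|^2$ dissipation (note the factor $\tfrac{\mu}{4}$ already sitting in $\eta_R$ is what allows the remaining $\tfrac{\mu}{2}$ on the right-hand side to be dropped after integration), giving the differential inequality $\tfrac{\d}{\d t}\eta_R\leq C\eta_R + C\|R_v\|_{\leb{2}(S^1)}^2+C\|\mathfrak{h}^{-1}R_\tau\|_{\leb{2}(S^1)}^2$. Finally, Gronwall's lemma on $[0,t]$ yields \eqref{Gron2}.
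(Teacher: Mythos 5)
Your overall architecture matches the paper's: start from \eqref{eq:rdrre} with $R_u=0$, handle $R_v$ by Cauchy--Schwarz/Young against $\Norm{v_h-\tilde v_h}_{\leb{2}(S^1)}^2$, handle $R_\tau$ by the duality \eqref{dual} plus the inverse bound $\Norm{G^+[R_\tau]}_{\leb{2}(S^1)}\leq C\Norm{\mathfrak{h}^{-1}R_\tau}_{\leb{2}(S^1)}$, and close via the mean-zero/discrete-Poincar\'e/coercivity chain $\Norm{u_h-\tilde u_h}_{\operatorname{dG}}^2\leq C\,a_h^d(u_h-\tilde u_h,u_h-\tilde u_h)\leq \tfrac{2C}{\gamma}\eta_R$. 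Making the propagation of \eqref{eq:mean} explicit is a nice touch.

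However, your treatment of the $W'$-difference term creates a genuine gap relative to the statement as claimed. You absorb $\int_{S^1}(W'(u_h)-W'(\tilde u_h))G^-[v_h-\tilde v_h]\d x$ into the viscous dissipation via Young's inequality, which produces a term $\tfrac{C}{\mu}\Norm{u_h-\tilde u_h}_{\leb{2}(S^1)}^2$. The corollary asserts that $C$ depends only on $\gamma,T,u_0,v_0,\Norm{\tilde u_h}_{\leb{\infty}(0,T;\sob{1}{\infty}(S^1))}$ --- not on $\mu$ --- and this $\mu$-independence is essential later (see the remark on viscosity: the estimate is meant to survive $\mu=0$, and it is precisely Corollary \ref{cor:Gron3}, not \ref{Gron}, whose constant is allowed to blow up like $1/\mu$). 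The paper avoids this by treating the $W'$ term the same way as the $R_\tau$ term: replace $W'(u_h)-W'(\tilde u_h)$ by $P_q[W'(u_h)-W'(\tilde u_h)]$, use the duality \eqref{dual} to move the gradient off $v_h-\tilde v_h$, apply the inverse estimate $\Norm{G^+[\Phi]}_{\leb{2}(S^1)}\leq C\norm{\Phi}_{\operatorname{dG}}$ from \eqref{eq:inverse-Gs}, then use dG-stability of $P_q$ and a chain-rule computation (this is where $W\in\cont{3}$ and the $\sob{1}{\infty}$ bound on $\tilde u_h$ actually enter, not merely a Lipschitz bound on $W'$) to get $\norm{W'(u_h)-W'(\tilde u_h)}_{\operatorname{dG}}^2\leq C\Norm{u_h-\tilde u_h}_{\operatorname{dG}}^2\leq C\gamma^{-1}\eta_R$. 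Your argument proves a correct differential inequality, but with a constant scaling like $1/\mu$, i.e.\ a strictly weaker statement than the one claimed; to repair it you should route the $W'$ term through duality and the dG-norm rather than through the dissipation.
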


\begin{proof}
  Upon using $R_u=0$, \eqref{dual} and Young's inequality on the
  assertion of Lemma \ref{lem:drrer} we obtain
  \begin{equation}\label{1303:1}
    \frac{\d}{\d t}  \eta_R 
    \leq
    \int_{S^1} R_v^2 + 2 (v_h - \tilde v_h)^2
    +
    (G^+[P_q[W'(u_h) -W'(\tilde u_h)]])^2  +(G^+[ R_\tau])^2\d x.
  \end{equation}
  Because of Lemma \ref{prop:grad1}, \eqref{1303:1} implies
  \begin{equation}\label{1303:2}
    \frac{\d}{\d t}  \eta_R  \leq   \int_{S^1} R_v^2 + 2(v_h - \tilde v_h)^2
    +
    \frac{C}{\mathfrak{h}^2} R_\tau^2\d x + \norm{P_q[W'(u_h) -W'(\tilde u_h)]}_{dG}^2.
  \end{equation}
  Using the stability of the $\leb{2}$ projection with respect to the
  dG-norm we get
  \begin{equation}
    \label{1303:3}
    \begin{split}
    \frac{\d}{\d t}  \eta_R 
    &\leq   \int_{S^1} R_v^2 + 2(v_h - \tilde v_h)^2
    +
    \frac{C}{\mathfrak{h}^2} R_\tau^2\d x + C\norm{W'(u_h) -W'(\tilde u_h)}_{dG}^2
    \\
    &\leq
    \int_{S^1} R_v^2 + 2(v_h - \tilde v_h)^2
    +
    \frac{C}{\mathfrak{h}^2} R_\tau^2\d x + C \Norm{u_h -\tilde u_h}_{dG}^2
    .
  \end{split}
\end{equation}
For the second inequality in \eqref{1303:3} we have used the fact that
\begin{equation}
  \begin{split}
    \norm{W'(u_h) -W'(\tilde u_h)}_{dG}^2
    &
    \leq
    \sum_n \bigg(
    \Norm{\qp{W''(u_h) -W''(\tilde u_h)} \del_{x} \tilde u_h }_{\leb{2}(I_n)}^2 
    \\
    &\qquad +
    \Norm{W''(\tilde u_h)\qp{ \del_{x} \tilde u_h - \del_x u_h} }_{\leb{2}(I_n)}^2
    +
    \frac{2\jump{ W'(u_h) - W'(\tilde u_h)}_n^2}{h_{n-1}+h_n}
    \bigg)
    \\
    &\leq 
    \sum_n \bigg( \norm{\tilde u_h}_{\sob{1}{\infty}}^2 \Norm{W''(u_h) -W''(\tilde u_h)}_{\leb{2}(I_n)}^2
    \\
    &\qquad + \Norm{W''(\tilde u_h)\qp{ \del_{x} \tilde u_h - \del_x u_h} }_{\leb{2}(I_n)}^2
    + \frac{2\jump{ W'(u_h) - W'(\tilde u_h)}_n^2}{h_{n-1}+h_n} \bigg)\\
    &\leq 
    C\sum_n \bigg( \norm{\tilde u_h}_{\sob{1}{\infty}}^2 \Norm{u_h -\tilde u_h}_{\leb{2}(I_n)}^2+ \Norm{\del_{x} \tilde u_h - \del_x u_h}_{\leb{2}(I_n)}^2
    + \frac{2\jump{ u_h - \tilde u_h}_n^2}{h_{n-1}+h_n}\bigg) ,
  \end{split}
\end{equation}
because $\Norm{W}_{\sob{3}{\infty}[-M,M]}$ is bounded for
\[ M:= \max\{\Norm{\tilde u_h}_{\leb{\infty}(0,T;\leb{\infty}(S^1))},
\Norm{u_h}_{\leb{\infty}(0,T;\leb{\infty}(S^1))}\}.\] The assertion of
the Lemma follows from \eqref{1303:3} as
\[ \Norm{u_h -\tilde u_h}_{dG}^2 \leq C \norm{u_h -\tilde u_h}_{dG}^2
\leq C a_h^d (u_h -\tilde u_h, u_h -\tilde u_h)\] due to
\eqref{eq:mean}.
\end{proof}

\begin{remark}[Parameter dependence]
  Note that the constant $M$ in the proof of Corollary \ref{Gron}
  depends on $\gamma$ which induces a subtle dependence of $C$ in
  \eqref{Gron2} on $\gamma$ which is intertwined with the growth
  behaviour of $W$ and its derivatives.  There is an additional
  $\gamma$ dependence of $C$ which enters when
  \[ \Norm{u_h - \tilde u_h}_{\leb{2}(S^1)}^2 + \Norm{v_h - \tilde
    v_h}_{\leb{2}(S^1)}^2\] is estimated by $C \eta_R.$ This leads to
  a subtle dependence of all the constants $C$ in the subsequent
  results on $\gamma$ and $C$ behaves like $1/\gamma$ at best.
\end{remark}

In case the reader takes special interest in the sharp interface case
$\gamma \rightarrow 0$ we like to state the following result which
shows that the previous estimate can also be obtained in a
uniform-in-$\gamma$ version. However, in that case, the stability
constant sensitively depends on $\mu$.

\begin{corollary}[Estimate of modified relative entropy]\label{cor:Gron3}
  Let the assumptions of Lemma \ref{lem:drrer} be satisfied with
  $R_u=0.$ Let $|W''|$ be uniformly bounded.  Then, there exists a
  constant $C>0$ depending only on $\mu,T, u_0, v_0,
  \Norm{W''}_{\leb{\infty}(\rR)}$ such that for $0 \leq t \leq T$
  \[ \eta_M (t):= \frac{1}{2}\Norm{u_h(t,\cdot) - \tilde
    u_h(t,\cdot)}_{\leb{2}(S^1)}^2 + \eta_R(t)\] satisfies
  \begin{equation}\label{Gron3a}   \frac{\d}{\d t}  \eta_M(t) \leq
    C  \eta_M(t)  + C \int_{S^1} R_v^2(t,\cdot) + R_{\tau}^2(t,\cdot) \d x.\end{equation}
  Therefore, Gronwall's Lemma implies (for $0 \leq t \leq T$)
  \begin{equation}\label{Gron3}
    \eta_M(t) \leq 
    C\Big(  \eta_M(0)  + \|R_v\|_{\leb{2}([0,t]\times S^1) }^2 +  \|R_\tau\|_{\leb{2}([0,t]\times S^1) }^2\Big) \exp(Ct).
  \end{equation}
\end{corollary}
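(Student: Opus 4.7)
The plan is to mimic the strategy of Corollary \ref{Gron} but exploit the two structural gifts we now have: (i) a uniform bound on $W''$ (which replaces the $\gamma$-dependent $L^\infty$ control that we needed before), and (ii) enough viscous dissipation $\mu \int (G^-[v_h - \tilde v_h])^2\,\d x$ to absorb each coupling term with $G^-[v_h - \tilde v_h]$ directly via Young's inequality, thereby avoiding the duality trick that produced the $\mathfrak{h}^{-2}$ factor in \eqref{Gron2}.

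First I would compute $\tfrac{\d}{\d t}\tfrac12\Norm{u_h-\tilde u_h}^2_{\leb{2}(S^1)}$. Since $u_h-\tilde u_h\in\rV_q$, it is a legal test function in both the first equation of \eqref{sds} and the first equation of \eqref{sds:pert}; with $R_u=0$ this yields
\begin{equation*}
\tfrac{\d}{\d t}\tfrac12\Norm{u_h-\tilde u_h}^2_{\leb{2}(S^1)}
= \int_{S^1} (u_h-\tilde u_h)\,G^-[v_h-\tilde v_h]\,\d x.
\end{equation*}
Adding this to the identity from Lemma \ref{lem:drrer} (with $R_u=0$), the evolution of $\eta_M$ is governed by the dissipation $-\tfrac{3\mu}{4}\int(G^-[v_h-\tilde v_h])^2\,\d x$ plus three cross terms paired with $G^-[v_h-\tilde v_h]$, namely $(W'(u_h)-W'(\tilde u_h))$, $R_\tau$, and $(u_h-\tilde u_h)$, together with the benign term $R_v(v_h-\tilde v_h)$.

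Next I would apply Young's inequality to each of these three cross terms using a weight $\mu/4$, so that all of the dissipation is consumed. For the $W'$-coupling the new ingredient is the pointwise mean-value bound $|W'(u_h)-W'(\tilde u_h)|\le \Norm{W''}_{\leb{\infty}(\rR)}|u_h-\tilde u_h|$, which converts it into a controllable $\leb{2}$-quantity without invoking any $\sob{1}{\infty}$ norm of $\tilde u_h$ or any $L^\infty$ bound on $u_h$ (and therefore no hidden $\gamma$-dependence). The term $R_v(v_h-\tilde v_h)$ is handled by plain Young's inequality, producing $R_v^2$ and $(v_h-\tilde v_h)^2$. Collecting everything yields
\begin{equation*}
\tfrac{\d}{\d t}\eta_M
\;\le\;
\tfrac{C}{\mu}\!\int_{S^1}\!\!(u_h-\tilde u_h)^2\,\d x
+C\!\int_{S^1}\!\!(v_h-\tilde v_h)^2\,\d x
+C\!\int_{S^1}\!\!R_v^2\,\d x
+\tfrac{C}{\mu}\!\int_{S^1}\!\!R_\tau^2\,\d x,
\end{equation*}
with $C$ depending on $\Norm{W''}_{\leb{\infty}(\rR)}$. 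Since $\eta_M$ dominates both $\tfrac12\Norm{u_h-\tilde u_h}^2_{\leb{2}(S^1)}$ and $\tfrac12\Norm{v_h-\tilde v_h}^2_{\leb{2}(S^1)}$ by construction, this is precisely \eqref{Gron3a}; \eqref{Gron3} then follows from Gronwall's lemma.

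The subtle point — and the only place where some care is needed — is the absorption budget: the available dissipation is exactly $\tfrac{3\mu}{4}$, and three distinct $G^-[v_h-\tilde v_h]$ cross terms must each surrender a $\tfrac{\mu}{4}$ share. This is tight but works, and it is precisely why the constant $C$ in \eqref{Gron3} blows up like $1/\mu$: removing the $\gamma$-dependence of the stability estimate has simply traded one small parameter for another, as noted in the statement.
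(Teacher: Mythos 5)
Your proposal is correct and follows essentially the same route as the paper's proof: you compute the time derivative of the $\leb{2}$ difference of the strains, add it to the identity of Lemma \ref{lem:drrer} with $R_u=0$, use the uniform bound on $W''$ to convert the $W'$-coupling into an $\leb{2}$ quantity, and absorb all three $G^-[v_h-\tilde v_h]$ cross terms with $\mu/4$ weights against the $\tfrac{3\mu}{4}$ dissipation before applying Gronwall. Your observation that the absorption budget is exactly exhausted, and that this is the source of the $1/\mu$ blow-up of the constant, matches the paper's accounting and its subsequent remark.
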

\begin{proof}
  Starting from \eqref{eq:rdrre} with $R_u=0$ and $|W''|$ uniformly
  bounded we find
  \begin{multline}\label{eq:25041}
    \frac{\d}{\d t} \eta_R  \leq \int_{S^1} -\frac{3}{4}\mu  |G^-[v_h-\tilde v_h]|^2 
    + R_v^2 +(v_h - \tilde v_h)^2 
    + \frac{C}{\mu} (u_h -\tilde u_h)^2  \d x\\
    +\int_{S^1}  \frac{\mu}{4}|G^-[v_h - \tilde v_h]|^2 + \frac{1}{\mu}R_\tau^2 + \frac{\mu}{4} |G^-[v_h - \tilde v_h]|^2\d x.
  \end{multline}
  In addition, because of \eqref{sds}$_1$ and \eqref{sds:pert}$_1$, it
  holds
  \begin{equation}
    \label{eq:25042}
    \begin{split}
      \frac{\d}{\d t} 
      \qp{\frac{1}{2}
        \Norm{u_h - \tilde u_h}_{\leb{2}(S^1)}^2
      }
      &=
      \int_{S^1} (u_h - \tilde u_h)\del_t (u_h - \tilde u_h)\d x
      \\
      &= \int_{S^1} (u_h - \tilde u_h)G^-[v_h - \tilde v_h]  \d x
      \\
      &\leq \int_{S^1}  \frac{1}{\mu} (u_h - \tilde u_h)^2 + \frac{\mu}{4} |G^-[v_h - \tilde v_h]|^2 \d x.
    \end{split}
  \end{equation}
  Adding \eqref{eq:25041} and \eqref{eq:25042} we obtain
  \begin{equation}
    \begin{split}
      \frac{\d}{\d t} \eta_M 
      &\leq \int_{S^1} 
      R_v^2 +(v_h - \tilde v_h)^2 
      + 
      \frac{1}{\mu}R_\tau^2 + \frac{C}{\mu} (u_h - \tilde u_h)^2   \d x
      \\
      &\leq C \eta_M + \int_{S^1} 
      R_v^2  
      +    \frac{1}{\mu}R_\tau^2 \d x ,
    \end{split}
\end{equation}
which proves \eqref{Gron3a} and \eqref{Gron3} follows by Gronwall's
inequality.
\end{proof}

\begin{remark}[Parameter dependence of the constant in (\ref{Gron3})]
  Note that the constant $C$ in \eqref{Gron3} scales like $1/\mu$ for
  $\mu \rightarrow 0.$
\end{remark}

\begin{proof}{of Lemma \ref{lem:drrer}.}
  A direct computation shows
  \begin{equation}\label{1303:4}
    \frac{\d}{\d t}  \eta_R  =\int_{S^1} (v_h - \tilde v_h) (\del_t v_h - \del_t \tilde v_h) + \frac{\mu}{4} \qp{G^-[v_h - \tilde v_h]}^2\d x + \gamma a_h^d(u_h - \tilde u_h, \del_t u_h - \del_t \tilde u_h). 
  \end{equation}
  Using $Z=\del_t (u_h - \tilde u_h)$ and $\Psi =v_h - \tilde v_h$ in
  \eqref{sds} and \eqref{sds:pert} we infer from \eqref{1303:4} that
  \begin{equation}\label{1303:5}
    \begin{split}
      \frac{\d}{\d t} 
      \eta_R 
      &=\int_{S^1} (v_h - \tilde v_h) G^+[ \tau_h - \tilde \tau_h] -R_v (v_h - \tilde v_h) \d x 
      \\
      &\qquad +
      \int_{S^1} (\tau_h - \tilde \tau_h) (\del_t u_h - \del_t \tilde u_h)
      - (W'(u_h) -W'(\tilde u_h))( \del_t u_h - \del_t \tilde u_h) \d x
      \\
      &\qquad
      + \int_{S^1}
      R_\tau (\del_t u_h - \del_t \tilde u_h)- \frac{3}{4}\mu G^-[v_h - \tilde v_h] G^-[v_h - \tilde v_h]\d x . 
    \end{split}
  \end{equation}
  Using $\Phi=(\tau_h- \tilde \tau_h)$ as a test function in
  \eqref{sds} and \eqref{sds:pert} and employing \eqref{dual} we
  obtain
  \begin{equation}\label{1303:6}
    \begin{split}
      \frac{\d}{\d t}  \eta_R 
      &=
      \int_{S^1}  -R_v (v_h - \tilde v_h) -R_u(\tau_h - \tilde \tau_h)
      - (W'(u_h) -W'(\tilde u_h))( \del_t u_h - \del_t \tilde u_h) \d x
      \\
      &\qquad +
      \int_{S^1}
      R_\tau (\del_t u_h - \del_t \tilde u_h)-\frac{3}{4} \mu G^-[v_h - \tilde v_h] G^-[v_h - \tilde v_h]\d x . 
    \end{split}
  \end{equation}
  As $( \del_t u_h - \del_t \tilde u_h) \in \rV_q$ for each $0 \leq t
  \leq T$ we may replace $(W'(u_h) -W'(\tilde u_h))$ by its $\leb{2}$ projection $P_q [W'(u_h) -W'(\tilde u_h)]$ in \eqref{1303:4}.  Upon
  using $\Phi = P_q [W'(u_h) -W'(\tilde u_h)]-R_\tau$ in \eqref{sds}
  and \eqref{sds:pert} we obtain the assertion of the Lemma from
  \eqref{1303:6}.
\end{proof}

\section{Projections and perturbed equations}\label{sec:proj}
Let $(u,v)$ be a strong solution of \eqref{thorder}, see Proposition
\ref{prop:thorder}. We aim at determining projections of $(u,v)$ and
$\tau:= W'(u) - \gamma \del_{xx}u $ so that these projections form a
perturbed solution of \eqref{sds} such that there is no residual in
the first equation and the residuals in the other equations are of
optimal order.

It is important to appropriately account for the highest
order derivative, as such, we project $u$ by the Riesz projection,
defined in \eqref{Riesz:def}.  Let us note that due to the linearity
of the definition of the Riesz projection we have
\begin{equation}
  \del_t \ritz [u]= \ritz [\del_t u] = \ritz [\del_x v].
\end{equation}

Since our aim is ensuring that the projections satisfy \eqref{sds}$_1$
exactly, this already determines the discrete gradient of the
projection of $v$.  Before we can focus on the projection of $v$ we
need to investigate the kernel and range of the gradient operators
$G^\pm$.  To this end we need to introduce some notation: By $l_k\in
\poly{k}(-1,1)$ we denote the $k$-th Legendre polynomial on $(-1,1)$
and by $l_k^n$ its transformation to the interval $I_n$, \ie
\begin{equation}
  l^n_k(x)
  =
  l_k\qp{2\qp{\frac{x-x_n}{h_n}} -1}
  .
\end{equation}
Let us gather the key properties of the Legendre polynomials which we
will employ in the sequel:
\begin{proposition}[Properties of the Legendre polynomials \cite{AW05}]\label{pro:Legendre}
  The transformed Legendre polynomials $l_k^n$ have the following properties
  \begin{gather}
    \label{Legendre1}
    (-1)^k l^n_k(x_n)= l^n_k(x_{n+1})=1,
    \\
    \label{Legendre3} 
  0\leq  \int_{I_n} l^n_{k'}(x)l^n_k(x) \d x 
     =
    \frac{h_n}{2k+1}\delta_{kk'} \leq h_n,\\
    \label{Legendre4}
     \Norm{ l^n_k}_{\leb{\infty}(I_n)}  \leq 1.
  \end{gather}
\end{proposition}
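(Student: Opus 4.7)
The plan is to reduce each of the three claims to the corresponding well-known property of the reference Legendre polynomial $l_k$ on $(-1,1)$ and then transfer the identity under the affine change of variables $\phi_n : I_n \to (-1,1)$, $\phi_n(x) := 2(x-x_n)/h_n - 1$, which defines $l_k^n = l_k \circ \phi_n$. The three classical facts I will invoke are: (i) $l_k(-1) = (-1)^k$ and $l_k(1) = 1$; (ii) orthogonality $\int_{-1}^1 l_k(s)\, l_{k'}(s)\, \d s = \tfrac{2}{2k+1}\delta_{kk'}$; and (iii) $\Norm{l_k}_{\leb{\infty}(-1,1)} = 1$. All three are standard and can be taken directly from \cite{AW05}.

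For \eqref{Legendre1}, I would note that $\phi_n(x_n)=-1$ and $\phi_n(x_{n+1}) = 1$, so $l_k^n(x_n) = l_k(-1) = (-1)^k$ and $l_k^n(x_{n+1}) = l_k(1) = 1$, which rearranges to the stated identity. For \eqref{Legendre3}, I would perform the substitution $s = \phi_n(x)$ with $\d x = (h_n/2) \d s$, giving
\begin{equation}
\int_{I_n} l_k^n(x)\, l_{k'}^n(x) \d x = \frac{h_n}{2} \int_{-1}^{1} l_k(s)\, l_{k'}(s) \d s = \frac{h_n}{2}\cdot \frac{2}{2k+1}\delta_{kk'} = \frac{h_n}{2k+1}\delta_{kk'},
\end{equation}
and the bracketing $0 \leq \frac{h_n}{2k+1}\delta_{kk'} \leq h_n$ is then immediate since $k\geq 0$. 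For \eqref{Legendre4}, since $\phi_n$ is a bijection $I_n \to (-1,1)$, one has
\begin{equation}
\Norm{l_k^n}_{\leb{\infty}(I_n)} = \sup_{x \in I_n} |l_k(\phi_n(x))| = \sup_{s \in (-1,1)} |l_k(s)| = \Norm{l_k}_{\leb{\infty}(-1,1)} \leq 1.
\end{equation}

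There is no serious obstacle here: the proposition is entirely bookkeeping built on the reference-element properties of Legendre polynomials, and the only substantive step is the single affine change of variables on each interval $I_n$. Consequently I would keep the proof short, cite \cite{AW05} for the three reference identities, and spell out only the change-of-variables computation used for \eqref{Legendre3}.
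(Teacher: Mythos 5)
Your proof is correct and complete; the paper itself offers no argument for this proposition, simply citing \cite{AW05} for the reference-element facts, and your affine change-of-variables computation is exactly the standard bookkeeping that the citation is meant to cover. Nothing is missing.
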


Let us point out the following convention in our notation for the subsequent calculations:
Superscripts will usually refer to the element/interval/vertex under
consideration while subscripts refer to the polynomial degree.  The
only exception is $h_n$ denoting the length of the $n$-th interval.

\begin{lemma}[The kernel of $G^\pm$]\label{lem:kernel}
  The kernel of each of the operators $G^\pm: \rV_q \rightarrow \rV_q$
  defined in \eqref{dgrad} is one dimensional and consists of the
  functions which are constant everywhere.  The range of $G^\pm$ is
  $\rV_q^m.$
\end{lemma}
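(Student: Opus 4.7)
\medskip

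\noindent\textbf{Proof proposal.} The plan is to split the statement into three pieces: (i) constants lie in $\ker G^\pm$, (ii) there are no other kernel elements, and (iii) the range equals $\rV_q^m$. Piece (i) is immediate from the defining identity \eqref{dgrad}: for a constant $w$, both $\del_x w$ on each $I_n$ and every jump $\jump{w}_n$ vanish, so the right hand side is zero for every test function.

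For (ii) I would test \eqref{dgrad} with localised Legendre basis functions. Fix $n$ and $k\in\{0,\dots,q\}$, and set $\Psi=l_k^n$ on $I_n$ and $\Psi=0$ elsewhere. Because $\Psi$ is supported on $I_n$, the volume integral reduces to $\int_{I_n}\del_x w\, l_k^n\d x$, and thanks to \eqref{Legendre1} only one jump contributes: for $G^+$ the trace $\Psi(x_n^+)=(-1)^k$ survives while $\Psi(x_{n+1}^+)=0$, and for $G^-$ it is the other way round with $\Psi(x_{n+1}^-)=1$ and $\Psi(x_n^-)=0$. Assuming $G^\pm[w]=0$, orthogonality \eqref{Legendre3} together with $\del_x w|_{I_n}\in\poly{q-1}(I_n)$ produces, for every $n$, a system of $q+1$ relations on the Legendre coefficients of $\del_x w|_{I_n}$ and the single scalar $\jump{w}_n$ (respectively $\jump{w}_{n+1}$). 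The key observation is that the top-degree test function $l_q^n$ yields a relation whose left hand side vanishes (since $\del_x w$ has degree $\leq q-1$), forcing $\jump{w}_n=0$ (respectively $\jump{w}_{n+1}=0$); the remaining $k\leq q-1$ relations then force all Legendre coefficients of $\del_x w|_{I_n}$ to be zero. Hence $w$ is constant on each $I_n$ with vanishing jumps, i.e.\ a single global constant.

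For (iii) I would first check that $G^\pm[w]$ always has zero mean by using $\Psi\equiv 1$ in \eqref{dgrad}: a telescoping/periodicity computation gives $\sum_n\int_{I_n}\del_x w\,\d x=\sum_n\jump{w}_n$, so the two terms cancel and $G^\pm[w]\in\rV_q^m$. The equality $\operatorname{range} G^\pm=\rV_q^m$ then follows from the rank-nullity theorem applied to $G^\pm:\rV_q\to\rV_q$, since $\dim\ker G^\pm=1$ by step (ii) and $\dim\rV_q^m=\dim\rV_q-1$.

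The only delicate point I foresee is the careful bookkeeping of which jumps are activated by the one-sided traces $\Psi(x_i^\pm)$ when $\Psi$ is supported on a single element (in particular handling the periodic wrap-around at $i=0$). Once this is organised, the proof is a short computation; no further estimates or approximation theory are required.
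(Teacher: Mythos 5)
Your proposal is correct and follows essentially the same route as the paper: both arguments test \eqref{dgrad} with the localised top-degree Legendre polynomial $l_q^n$, use the orthogonality of $\del_x w|_{I_n}\in\poly{q-1}(I_n)$ to $l_q^n$ to isolate and kill the jump, and then obtain the range statement from the zero-mean property of $G^\pm[w]$ together with a codimension count. The only (immaterial) difference is in finishing the kernel computation: you test with the remaining $l_k^n$, $k\leq q-1$, to annihilate the Legendre coefficients of $\del_x w$, whereas the paper instead uses $0=\int_{S^1}G^+[\Phi]\,G^+[\Phi]\,\d x$ once continuity is known.
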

\begin{proof}
  We will give the proof for the kernel of $G^+,$ the modifications
  for $G^-$ are straightforward.  Consider $\Phi \in \rV_q$ with
  $G^+[\Phi]=0.$ Let us fix some $n$ and define $\Psi \in \fes_q$ by
 \[ \Psi(x) := \left\{
   \begin{array}{ccc}
     l_q^n(x) &:& x \in I_n \\ 0 &:& x \not \in I_n
   \end{array}\right.\]
 we find, as $\del_x(\Phi|_{I_n}) \in \rP^{q-1}(I_n),$
 \[ 0 = \int_{S^1} G^+[\Phi] \Psi\d x = \sum_n \Big( \int_{I_n}
 \del_x\Phi \Psi \d x - \Psi(x_n^+) \jump{\Phi}_{n} \Big) = (-1)^{q+1}
 \jump{\Phi}_{n}.\]
 As $n$ was arbitrary we obtain that $\Phi$ is continuous.  The
 continuity of $\Phi$ implies
 \[ 0 = \int_{S^1} G^+[\Phi]G^+[\Phi] \d x = \sum_n \int_{I_n}
 (\del_x\Phi )^2 \d x.\] Therefore, $\Phi$ is continuous and constant
 in each interval. Thus, $\Phi$ is globally constant and the assertion
 for the kernel is proven.  We infer from the result for the kernel
 that the range of $G^\pm$ has codimension $1$.  The proof is
 concluded by the observation
 \[ \int_{S^1 } G^\pm [\Phi] \d x = \sum_{n} \qp{ \int_{I_n} \del_x
   \Phi \d x - \jump{\Phi}_n }=0 \Foreach \Phi \in \rV_q,\] which
 implies that the range of $G^\pm$ is a subset of $\rV_q^m.$
\end{proof}

\begin{remark}[Properties of one sided gradients]
  The properties of $G^\pm$ asserted in Lemma \ref{lem:kernel}
  distinguish them from the ``central'' discrete gradients used in
  \cite{GiesselmannMakridakisPryer:2014} which may have $2$-dimensional kernels.
\end{remark}

Our next aim is to show the following discrete Poincar\'{e} inequality:
\begin{lemma}[Discrete Poincar\'{e} inequality]\label{lem:dpi}
  There exists a constant $C>0$ independent of $h$ such that
  \[ \| \Phi \|_{\leb{2}(S^1)} \leq C  \| G^-[\Phi] \|_{\leb{2}(S^1)} \Foreach \Phi \in \fes_q^m.\]
\end{lemma}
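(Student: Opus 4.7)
The plan is to establish the sharper seminorm estimate $|\Phi|_{\operatorname{dG}}^2 \leq C \Norm{G^-[\Phi]}_{\leb{2}(S^1)}^2$ for every $\Phi \in \fes_q$, and then to combine it with the standard discrete Poincar\'e--Friedrichs inequality $\Norm{\Phi}_{\leb{2}(S^1)} \leq C |\Phi|_{\operatorname{dG}}$ valid on $\fes_q^m$ (proved by comparing $\Phi$ to its elementwise mean and applying a discrete Poincar\'e for zero-mean piecewise constants) in order to conclude.

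To prove the seminorm estimate I would introduce the jump lifting $L^-[\Phi] \in \fes_q$ defined by $\int_{S^1} L^-[\Phi]\,\Psi\,\d x = \sum_i \jump{\Phi}_i \Psi(x_i^-)$ for all $\Psi \in \fes_q$, so that $G^-[\Phi]$ equals the broken derivative of $\Phi$ minus $L^-[\Phi]$. Expanding in the Legendre basis from Proposition \ref{pro:Legendre} and exploiting the orthogonality relation \eqref{Legendre3} yields the closed form $L^-[\Phi]|_{I_{i-1}} = \frac{\jump{\Phi}_i}{h_{i-1}}\sum_{k=0}^q(2k+1)l_k^{i-1}$, whence $\Norm{L^-[\Phi]}_{\leb{2}(I_{i-1})}^2 = \frac{(q+1)^2}{h_{i-1}}\jump{\Phi}_i^2$. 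Expanding $\Norm{G^-[\Phi]}_{\leb{2}(S^1)}^2$ via this decomposition and noting that $\del_x\Phi|_{I_{i-1}} \in \poly{q-1}(I_{i-1})$ annihilates the $l_q^{i-1}$ component of the lifting, a short computation using $l_k^{i-1}(x_i)=1$ yields the \emph{exact} cross-term identity $\int_{S^1}\del_x\Phi \cdot L^-[\Phi]\,\d x = \sum_i \jump{\Phi}_i \del_x\Phi(x_i^-)$.

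The remaining task is to control this cross term, which I would do via the sharp trace-type bound $|\del_x\Phi(x_i^-)|^2 \leq \frac{q^2}{h_{i-1}}\Norm{\del_x\Phi}_{\leb{2}(I_{i-1})}^2$, obtained by a Cauchy--Schwarz argument in the Legendre expansion together with $\sum_{k=0}^{q-1}(2k+1) = q^2$, followed by Young's inequality with a weight $\alpha \in (q^2,(q+1)^2)$; the concrete choice $\alpha = q(q+1)$ then produces
\begin{equation*}
  \Norm{G^-[\Phi]}_{\leb{2}(S^1)}^2 \geq \frac{1}{q+1}\sum_i \Norm{\del_x\Phi}_{\leb{2}(I_i)}^2 + (q+1)\sum_i \frac{\jump{\Phi}_i^2}{h_{i-1}}.
\end{equation*}
The main obstacle is precisely this numerical balancing: the lifting constant $(q+1)^2$ only barely dominates the trace constant $q^2$, so a crude Cauchy--Schwarz bound on the cross term would leave the jump coefficient negative once $q \geq 3$, and the exact identity for the cross term is therefore essential. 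The bounded-ratio assumption on neighbouring cell sizes then converts $\sum_i \jump{\Phi}_i^2/h_{i-1}$ into the dG-seminorm jump sum $\sum_i 2\jump{\Phi}_i^2/(h_{i-1}+h_i)$ up to a multiplicative constant, completing the argument.
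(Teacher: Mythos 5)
Your proposal is correct, but it takes a genuinely different route from the paper. The paper's proof works entirely at the level of Legendre coefficients: it expresses the defining relations of $G^-$ as algebraic identities between the coefficients of $\Phi$, $\del_x\Phi$ and $G^-[\Phi]$, inverts the elementwise differentiation map (with $\Norm{D_n^{-1}}_2 = \mathcal{O}(h_n)$) to control the non-constant modes, and then runs a telescoping argument on the cell averages $a_0^n$ that exploits the zero-mean condition together with the assumption $hN \leq C$. You instead factor the estimate through the dG seminorm: your lifting-operator computation, with the exact cross-term identity $\int_{S^1}\del_x\Phi\, L^-[\Phi]\,\d x = \sum_i \jump{\Phi}_i\,\del_x\Phi(x_i^-)$ (which, note, is immediate from the definition of $L^-$ since $\del_x\Phi\in\fes_{q-1}\subset\fes_q$, no Legendre expansion needed) and the sharp trace constant $q^2$ beaten by the lifting constant $(q+1)^2$, in fact reproves the paper's Lemma \ref{coercgm} (coercivity of $G^-$), which the paper establishes separately by a Gauss--Radau test-function construction; you then delegate the passage from $\norm{\cdot}_{\operatorname{dG}}$ to $\Norm{\cdot}_{\leb{2}(S^1)}$ on zero-mean functions to the standard discrete Poincar\'e--Friedrichs inequality for broken polynomial spaces. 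Your numerology checks out: $\sum_{k=0}^{q}(2k+1)=(q+1)^2$, $\sum_{k=0}^{q-1}(2k+1)=q^2$, and the weight $\alpha=q(q+1)$ gives the stated coefficients $1/(q+1)$ and $q+1$; your observation that a crude Cauchy--Schwarz treatment of the cross term would destroy positivity is the right diagnosis of why the exact identity matters. What each approach buys: the paper's argument is fully self-contained but leans on the global quasi-uniformity hypothesis $hN\leq C$; yours yields Lemma \ref{coercgm} with explicit $q$-dependence as a by-product and isolates the global (zero-mean) part of the argument in a citable standard result, at the cost of importing that result (or sketching its telescoping proof, which is essentially what the paper does for the $a_0^n$ anyway).
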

\begin{proof}
  For each interval $I_n$ let $D_n$ denote the map
  \[ \operatorname{span}\{ l_1^n, \dots , l_q^n\} \rightarrow
  \operatorname{span}\{ l_0^n, \dots , l_{q-1}^n\}, \qquad \Phi
  \mapsto \del_x\Phi. \] Since $\ker{D_n}$
  is trivial, as it consists of functions which are constant and
  orthogonal to constant functions, we have that $D_n$ is invertible.  Comparing $D_n$ to the analogous
  map on $(-1,1),$ instead of $I_n,$ we obtain that $\|D_n^{-1}\|_2
  =\mathcal{O}( h_n)$, where $\Norm{\cdot}_2$ denotes the Euclidean matrix norm.  Let us now write the functions under
  consideration as linear combinations of transformed Legendre
  polynomials in each interval
  \begin{equation}
    G^-[\Phi]|_{I_n}(x) = \sum_{r=0}^q g^n_r l^n_r(x) , \quad \Phi|_{I_n}(x) = \sum_{r=0}^q a^n_r l^n_r(x) , \quad \del_x(\Phi|_{I_n})(x) = \sum_{r=0}^{q-1} b^n_r l^n_r(x),
  \end{equation}
  with real numbers $(g^n_r)_{r=0,\dots,q}^{n=0,\dots,N-1}$,
  $(a^n_r)_{r=0,\dots,q}^{n=0,\dots,N-1}$,
  $(b^n_r)_{r=0,\dots,q-1}^{n=0,\dots,N-1}.$ Let $\chi^n$ denote the
  characteristic function of $I_n$.  Then we have by definition of
  $G^-$
 \begin{equation}\label{dpi:1a}
   \int_{S^1} G^-[\Phi] (l^n_r -l^n_q ) \chi^n \d x = \int_{S^1} \del_x\Phi l_r^n\chi^n \d x \Foreach r=0,\dots,q-1  ,
 \end{equation}
 as $\del_x \Phi$ is orthogonal to $l^n_q$ and $(l^n_r
 -l^n_q)(x_{n+1}^-)=0,$ and
 \begin{equation}\label{dpi:1b}
   \int_{S^1} G^-[\Phi]  l^n_q  \chi^n \d x = - \jump{\Phi}_{n+1}
 \end{equation}
 because $l^n_q(x_{n+1}^-)=1.$ This implies
 \begin{equation}\label{dpi:2}
   \frac{g^n_r}{2r +1 } - \frac{g^n_q}{2q +1 }= \frac{b^n_r}{2r +1 } \ \forall r=0,\dots,q-1 \quad \text{and} \quad  \frac{g^n_q h_n }{2q +1 } = \sum_{r=0}^q (-1)^r a^{n+1}_r-  \sum_{r=0}^q a^n_r .
 \end{equation}
 From \eqref{dpi:2}$_1$ we infer 
 \begin{equation}\label{dpi:3}
   | b_r^n|   \leq   | g_r^n| +   | g_q^n|.
 \end{equation}
 For $\vec a^n=\Transpose{(a_1^n,\dots,a_q^n)},$ $\vec g^n=
 \Transpose{(g_0^n,\dots,g_{q}^n)},$ and $\vec
 b^n=\Transpose{(b_0^n,\dots,b_{q-1}^n)}$ we have $\Norm{\vec b^n}
 \leq C \Norm{\vec g^n}$ and $\vec b^n = D_n \vec a^n$ such that
 \begin{equation}\label{dpi:3b} \| \vec a^n\| \leq C  h_n \|\vec g^n\|,\end{equation}
 as $\|D_n^{-1}\|_2 =\mathcal{O}(h_n)$.
 
 From \eqref{dpi:2}$_2$ we infer 
 \begin{equation}\label{dpi:4}
   a^n_0 - a^{n+1}_0 = -\frac{g^n_q h_n }{2r +1 } -\sum_{r=1}^q a^n_r +  \sum_{r=1}^q (-1)^r a^{n+1}_r =: c^n
 \end{equation}
 with $c^n = \mathcal{O}(h_n (\Norm{\vec g^n} + \Norm{\vec g^{n+1}}))$ for each $n$ due to \eqref{dpi:3b}.
 As $\Phi \in \fes_q^m$ we have $\sum_{n=0}^{N-1} a_0^n =0.$ Therefore,
 $\tilde{ \vec a} =\Transpose{(a^0_0,\dots, a^{N-1}_0)}$ and $\vec c =\Transpose{(c^0, \dots, c^{N-1})}$ satisfy
 \begin{equation}
   \begin{split}
     \| \tilde {\vec a}\|_2^2 
     &=
     \sum_{n=0}^{N-1} (a^n_0)^2
     = \sum_{n=0}^{N-1} \qp{a^n_0 - \frac{1}{N} \sum_{j=0}^{N-1} a^j_0}^2\\
     &= 
     \sum_{n=0}^{N-1}  \qp{\frac{1}{N} \sum_{j=0}^{N-1} a^n_0 - a^j_0}^2
     \leq \sum_{n=0}^{N-1} \sum_{j=0}^{N-1} \frac{1}{N} \qp{a^n_0 - a^j_0}^2\\
     &\leq
     \sum_{n=0}^{N-1} \sum_{j=0}^{N-1} \frac{1}{N} \qp{\sum_{k=0}^{N-1} | c^k|}^2
     \leq \sum_{n=0}^{N-1} \sum_{j=0}^{N-1} \sum_{k=0}^{N-1} | c^k|^2 = N^2 \| \vec c\|_2^2,
   \end{split}
 \end{equation}
 where we used Jensen's inequality, the definition of $c^n$ and Cauchy-Schwarz inequality.
 Combining the preceding estimates we conclude
 \begin{equation}
   \begin{split}
     \Norm{\Phi}_{\leb{2}(I)}^2 &\leq \sum_{n=0}^{N-1} \sum_{r=0}^q h_n |a_r^n |^2\\
     &\leq  h \sum_{n=0}^{N-1}|a_0^n |^2 + \sum_{n=0}^{N-1} \sum_{r=1}^q h_n |a_r^n |^2 \\
     &\leq h\qp{ \sum_{n=0}^{N-1}|a_0^n |^2 + \sum_{n=0}^{N-1} \Norm{\vec a^n}^2}\\
     & \leq C h N^2 \sum_{n=0}^{N-1} |c^n|^2 + C \sum_{n=0}^{N-1} h^3 \| \vec g^n\|^2\\
     &\leq C h N^2 \sum_{n=0}^{N-1} h^2\|\vec g^n\|^2 + C \sum_{n=0}^{N-1} h^3 \| \vec g^n\|^2\\
     & \leq C h \sum_{n=0}^{N-1} \sum_{r=0}^q |g_r^n |^2 \leq C \Norm{G^-[\Phi]}_{\leb{2}(I)}^2 ,
   \end{split}
 \end{equation}
where we have used that $hN$ is bounded.
\end{proof}

\begin{definition}[Projection $Q$]\label{def:proj1}
  For $q \in \rN$ we define $ S_q^\pm : \cont{0}(S^1) \rightarrow
  \rV_q $ by
  \[  S_q^\pm[w](x_n^\pm)= w(x_n), \quad \int_{S^1} (S_q^\pm[w] - w)  \Phi\d x =0 \quad \forall \ \Phi \in \rV_{q-1}.\]
  We also define
  $Q:  \cont{1}(S^1)  \rightarrow \rV_q $ by
  \begin{equation}\label{qdef}
    G^-[Q[w]] = \ritz [\del_xw] \quad \text{and} \quad  \int_{S^1} Q[w] - w \d x =0.
  \end{equation}
  Note that $Q[w]$ is well-defined by \eqref{qdef} due to Lemma \ref{lem:kernel} and the fact that $\int_{S^1} \ritz [\del_xw] \d x=\int_{S^1} \del_x w\d x =0$ as $w$ is periodic. 
\end{definition}

\begin{lemma}[Properties of the projection operator $Q$]\label{lem:proj1}
  The projection operators from Definition \ref{def:proj1} satisfy the following estimates: There exists a $C>0,$ independent of $h,$ such that
  for every $w \in \sobh{q+3}(S^1)$
  \begin{equation}\label{eq:proj1}
    \begin{split}
      \| S_q^\pm[w] - w\|_{\leb{2}(S^1)} &= Ch^{q+1} \Norm{w}_{\cont{q+1}(S^1)}\\
      \Norm{ G^-\qb{Q[w] - S_{q}^+[w]}}_{\leb{2}(S^1)} &= Ch^{q+1} \Norm{w}_{\sobh{q+3}(S^1)}\\
      \| Q[w] - S_{q}^+[w]\|_{\leb{2}(S^1)} &= Ch^{q+1} \Norm{w}_{\sobh{q+3}(S^1)}.
    \end{split}
  \end{equation}
  
\end{lemma}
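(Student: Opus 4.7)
The plan is to reduce all three estimates to standard projection bounds, with the crux being an algebraic identity expressing $G^-$ applied to the Radau-type interpolant $S_q^+[w]$ in terms of the $\leb{2}$-projection of $\del_x w$.

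For the bound on $S_q^\pm[w]-w$, note that on each interval $I_n$ the defining conditions reduce to a single endpoint value plus orthogonality against $\poly{q-1}(I_n)$, so the local operator is well-defined, linear, and preserves $\poly{q}(I_n)$. A standard Bramble--Hilbert / Radau interpolation argument then yields $\Norm{S_q^\pm[w]-w}_{\leb{2}(I_n)}\leq C h_n^{q+1}\Norm{w}_{\cont{q+1}(I_n)}$, which sums over $n$ to the stated bound.

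The heart of the proof is the identity $G^-[S_q^+[w]]=P_q[\del_x w]$. To show it, fix $\Psi\in\rV_q$ and start from \eqref{dgrad}: on each $I_n$ integrate $\int_{I_n}\del_x S_q^+[w]\,\Psi\,\d x$ by parts, then exploit that $\del_x\Psi|_{I_n}\in\poly{q-1}(I_n)$ together with the orthogonality in Definition \ref{def:proj1} to replace $\int_{I_n} S_q^+[w]\,\del_x\Psi\,\d x$ by $\int_{I_n} w\,\del_x\Psi\,\d x$. Integrating back by parts and carefully tracking the boundary contributions---in particular exploiting $S_q^+[w](x_n^+)=w(x_n)$---the remaining jump contributions cancel exactly against the jump term in \eqref{dgrad}, leaving $\int_{S^1}\del_x w\,\Psi\,\d x$ for every $\Psi\in\rV_q$. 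This boundary-term bookkeeping is the main delicate point; the rest is routine.

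Granting this identity and using \eqref{qdef}, the second estimate follows from
\[
  G^-[Q[w]-S_q^+[w]] \;=\; \ritz[\del_x w]-P_q[\del_x w],
\]
combined with the Riesz-projection bound \eqref{Riesz:est} applied to $\del_x w$ and the standard $\leb{2}$-projection approximation estimate: each term on the right-hand side differs from $\del_x w$ by at most $C h^{q+1}\Norm{w}_{\sobh{q+3}(S^1)}$ in $\leb{2}(S^1)$, so the triangle inequality finishes $(2)$. For the third estimate, I first observe that $Q[w]-S_q^+[w]\in\rV_q^m$: equation \eqref{qdef} gives $\int_{S^1}Q[w]\,\d x=\int_{S^1}w\,\d x$, and testing the orthogonality relation in Definition \ref{def:proj1} against the constant $1\in\rV_{q-1}$ on each interval yields the same identity for $S_q^+[w]$. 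The discrete Poincar\'e inequality (Lemma \ref{lem:dpi}) then bounds $\Norm{Q[w]-S_q^+[w]}_{\leb{2}(S^1)}$ by $\Norm{G^-[Q[w]-S_q^+[w]]}_{\leb{2}(S^1)}$, closing the argument.
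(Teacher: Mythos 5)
Your proof is correct and follows essentially the same route as the paper's: the identity $G^-[S_q^+[w]] = P_q[\del_x w]$ that you isolate is precisely the duality/integration-by-parts computation the paper carries out inside its $\sup_{\Psi}$ argument, leading to the same relation $G^-[Q[w]-S_q^+[w]]=\ritz[\del_x w]-P_q[\del_x w]$. The remaining steps (exactness of $S_q^\pm$ on $\fes_q$ for the first bound, and the mean-zero observation combined with the discrete Poincar\'e inequality of Lemma \ref{lem:dpi} for the third) coincide with the paper's proof, with your explicit verification that $Q[w]-S_q^+[w]\in\rV_q^m$ being a detail the paper leaves implicit.
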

\begin{proof}
  The first assertion follows from the fact that $S_q^\pm$ is exact for functions in $\fes_q.$
  We obtain the second assertion as follows: Let $\rU:= \{ \Psi \in \fes_q : \Norm{\Psi}_{\leb{2}(S^1)}=1\},$ then
  \begin{equation}
    \begin{split}
      \Norm{G^-\qb{Q[w] - S^+_{q}[w]}}_{\leb{2}(S^1)} &= \sup_{\Psi \in \rU} \int_{S^1}  \qp{G^-\qb{Q[w] - S_{q}^+[w]}}\Psi\d x\\
      &= \sup_{\Psi \in \rU} \qp{\int_{S^1} \ritz [\del_x w] \Psi + S_q^+[w] G^+[\Psi]\d x }\\
      &= \sup_{\Psi \in \rU} \qp{\int_{S^1} \ritz [\del_x w] \Psi + S_q^+[w]\del_x \Psi \d x - \sum_n S_q^+[w](x_n^+) \jump{\Psi}_n }\\
      &= \sup_{\Psi \in \rU} \qp{\int_{S^1} \ritz [\del_x w] \Psi + w\del_x \Psi \d x - \sum_n w(x_n) \jump{\Psi}_n }\\
      &= \sup_{\Psi \in \rU} \int_{S^1} \ritz [\del_x w] \Psi - \del_x w \Psi \d x \\
      & \leq \Norm{\ritz [\del_x w] - P_q[\del_x w]}_{\leb{2}(S^1)} 
      \\
      &\leq Ch^{q+1} \Norm{w}_{\sobh{q+3}(S^1)}
    \end{split}
  \end{equation}
  because of the properties of $\ritz,$ see \eqref{Riesz:est}, $Q$,
  \eqref{dual} and $P_q$ as $\cont{q+2}(S^1) \subset \sobh{q+3}(S^1).$
  The third assertion is a consequence of the second and Lemma
  \ref{lem:dpi}.
\end{proof}

\begin{definition}[Projection $R$]\label{def:proj2}
  Let $\tau \in \cont{0}([0,T],\sobh{1}(S^1))$ and $u \in
  \cont{0}([0,T],\sobh{3}(S^1))$ be related by $\tau = W'(u)-\gamma
  \del_{xx} u.$ Then, the projection $R[\tau] \in
  \cont{0}([0,T],\rV_q)$ is defined by
  \[ \int_{S^1} R[ \tau] \Psi \d x = \int_{S^1} W'(u) \Psi \d x
  -\gamma a_h^d(\ritz [u],\Psi) \Foreach \Psi \in \fes_q.\]
\end{definition}

\begin{lemma}[Perturbed equations]
  Let $(u,v)$ be a strong solution of \eqref{thorder} and $\tau:=W'(u)-\gamma \del_{xx} u.$ Then, the projections $(\ritz [u],Q[v],R[\tau])$ satisfy
  \begin{equation}\label{sds:proj}
    \begin{split}
      \int_{S^1} \del_t \ritz [u] \Phi - G^-\qb{Q[v]}\Phi \d x &=0 \quad \forall \ \Phi \in \rV_q \\
      \int_{S^1}\del_t Q[v] \Psi- G^+\qb{R[\tau]}\Psi  + \mu G^-\qb{Q[v]} G^-[\Psi]\d x &=\int_{S^1} R_v \Psi \d x \quad \forall \ \Psi \in \rV_q \\
      \int_{S^1} R[\tau] Z - W'(\ritz [u]) Z \d x - \gamma a_h^d(\ritz [u],Z) &=\int_{S^1} R_\tau Z \d x \quad \forall \ Z \in \rV_q ,
    \end{split}
  \end{equation}
with 
\begin{equation}
 \begin{split}
    R_\tau &:= P_q[ W'(u) - W'(\ritz [u])],\\
    R_v &:= -P_q[\del_t ( v - Q[v])] + P_q[\del_x \tau] - G^+[R[\tau]] + \mu P_q[\del_{xx} v] - \mu G^+ [G^-[Q[v]]].
 \end{split}
\end{equation}
\end{lemma}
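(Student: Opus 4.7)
My plan is to verify the three equations in \eqref{sds:proj} one by one. The first holds exactly, by construction of $\ritz$ and $Q$; the third falls out immediately from Definition \ref{def:proj2}; and the second, which is where the nontrivial identification of $R_v$ happens, will follow by substituting the continuous PDE and using the duality of the discrete gradients.

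For the first equation I would use the linearity of the Riesz projection together with \eqref{thorder}$_1$ to write $\del_t \ritz[u] = \ritz[\del_t u] = \ritz[\del_x v]$. Then the defining property \eqref{qdef} of $Q$ gives $G^-[Q[v]] = \ritz[\del_x v]$, so the integrand $\del_t \ritz[u] - G^-[Q[v]]$ vanishes pointwise in $t$ and $R_u = 0$.

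For the third equation I would substitute the definition of $R[\tau]$ into the left-hand side of \eqref{sds:proj}$_3$; the $\gamma a_h^d(\ritz[u],Z)$ contributions combine and leave only $\int_{S^1} (W'(u) - W'(\ritz[u])) Z \d x$. Since $Z \in \rV_q$, this equals $\int_{S^1} P_q[W'(u) - W'(\ritz[u])] Z \d x$, identifying $R_\tau$.

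The main task is the second equation. The plan is to start from the left-hand side of \eqref{sds:proj}$_2$, add and subtract $\int_{S^1} \del_t v \,\Psi \d x$, and invoke \eqref{thorder}$_2$ together with $\tau = W'(u) - \gamma \del_{xx} u$ to rewrite $\del_t v = \del_x \tau + \mu \del_{xx} v$. Then, using the duality \eqref{dual} of the one-sided gradients, the viscous contribution can be rewritten as $\mu \int_{S^1} G^-[Q[v]] G^-[\Psi] \d x = -\mu \int_{S^1} G^+[G^-[Q[v]]] \Psi \d x$. Collecting terms produces an integrand $\bigl(\del_t(Q[v]-v) + \del_x \tau - G^+[R[\tau]] + \mu \del_{xx} v - \mu G^+[G^-[Q[v]]]\bigr)\Psi$ tested against $\Psi \in \rV_q$; the non-polynomial pieces $\del_t(v - Q[v])$, $\del_x \tau$, $\del_{xx} v$ may then be replaced by their $P_q$-projections, while $G^+[R[\tau]]$ and $G^+[G^-[Q[v]]]$ already lie in $\rV_q$, which reproduces exactly the claimed $R_v$.

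The main obstacle is the careful bookkeeping of $P_q$ in the derivation of $R_v$: one must track precisely which terms already belong to $\rV_q$ (so that $P_q$ acts as the identity) and which do not, so that the final formula for $R_v$ has the form stated rather than a shifted version. No sharp estimates or nontrivial inequalities are required at this stage, only definitions, the PDE, and the algebraic identity \eqref{dual}.
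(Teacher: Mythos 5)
Your proposal is correct and follows essentially the same route as the paper's (very terse) proof: the first equation holds exactly by $\del_t\ritz[u]=\ritz[\del_x v]=G^-[Q[v]]$, the third is immediate from Definition \ref{def:proj2}, and the second is obtained by inserting the weak form of \eqref{thorder}$_2$ and applying the duality \eqref{dual}, with the $P_q$ insertions justified because the test functions lie in $\rV_q$. Your write-up merely makes explicit the bookkeeping that the paper leaves implicit, so no further changes are needed.
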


\begin{proof}
  The first equation in \eqref{sds:proj} is a direct consequence of
  the definition of $Q[v]$ in Definition \ref{def:proj1}.  The second
  equation in \eqref{sds:proj} follows from
  \begin{equation}
    \int_{S^1}\del_t v \Psi- \del_x \tau \Psi  - \mu \del_{xx}v \Psi\d x =0 \quad \forall \ \Psi \in \rV_q 
  \end{equation}
  and the duality \eqref{dual}.  The third equation follows from the
  definition of $R[\tau] $ in Definition \ref{def:proj2}.
\end{proof}

\begin{lemma}[Coercivity of $G^-$]\label{coercgm}
 There exists a constant $C>0$ only depending on $q$ such that for every $w \in \fes_q$
 \[ \norm{w}_{\operatorname{dG}} \leq C \Norm{G^-[w]}_{\leb{2}(S^1)}.\]
\end{lemma}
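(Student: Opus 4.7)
The plan is to argue interval-by-interval using the Legendre coefficient representation from the proof of Lemma \ref{lem:dpi}. Write, on each $I_n$,
\begin{equation*}
  w|_{I_n} = \sum_{r=0}^q a_r^n l_r^n, \quad G^-[w]|_{I_n} = \sum_{r=0}^q g_r^n l_r^n, \quad (\del_x w)|_{I_n} = \sum_{r=0}^{q-1} b_r^n l_r^n.
\end{equation*}
The key test functions are $\Psi = l_r^n \chi^n \in \fes_q$ for $r=0,\dots,q$, where $\chi^n$ is the characteristic function of $I_n$. Such $\Psi$ has $\Psi(x_n^-)=0$ (support lies in $I_n$) and $\Psi(x_{n+1}^-)=l_r^n(x_{n+1})=1$ by \eqref{Legendre1}.

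Testing with $r=q$, the orthogonality of $l_q^n$ to $(\del_x w)|_{I_n}$ (which has degree at most $q-1$) together with \eqref{Legendre3} gives
\begin{equation*}
  \tfrac{h_n}{2q+1} g_q^n = -\jump{w}_{n+1},
\end{equation*}
so the top Legendre coefficient of $G^-[w]$ on $I_n$ encodes precisely the jump at the right endpoint. Testing with $r<q$ and substituting this relation yields $b_r^n = g_r^n - \tfrac{2r+1}{2q+1} g_q^n$, so the coefficients of $\del_x w$ are controlled by those of $G^-[w]$.

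With these two identities in hand the rest is bookkeeping. For the derivative part, $(b_r^n)^2 \leq 2(g_r^n)^2 + 2\tfrac{(2r+1)^2}{(2q+1)^2}(g_q^n)^2$, hence
\begin{equation*}
  \Norm{\del_x w}_{\leb{2}(I_n)}^2 = \sum_{r=0}^{q-1} \tfrac{h_n}{2r+1}(b_r^n)^2 \leq 2\Norm{G^-[w]}_{\leb{2}(I_n)}^2 + \tfrac{2q^2}{2q+1}\Norm{G^-[w]}_{\leb{2}(I_n)}^2,
\end{equation*}
where I used $\sum_{r=0}^{q-1}(2r+1)=q^2$ and the bound $h_n(g_q^n)^2 \leq (2q+1)\Norm{G^-[w]}_{\leb{2}(I_n)}^2$. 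For the jump part,
\begin{equation*}
  \frac{\jump{w}_{n+1}^2}{h_n+h_{n+1}} = \frac{h_n^2 (g_q^n)^2}{(2q+1)^2(h_n+h_{n+1})} \leq \frac{1}{2q+1}\Norm{G^-[w]}_{\leb{2}(I_n)}^2.
\end{equation*}
Summing over $n$ and recalling $\jump{w}_n$ appears on interval $I_{n-1}$ on the left-hand side above completes the proof.

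The argument is largely mechanical; the main conceptual step is spotting the right local test functions. Unlike the proof of Lemma \ref{lem:dpi}, no Poincaré-type argument is required because we only need to control the seminorm $\norm{\cdot}_{\operatorname{dG}}$ (which vanishes on constants, matching the kernel of $G^-$ identified in Lemma \ref{lem:kernel}); hence the estimate is purely local on each $I_n$ and the constant depends only on $q$.
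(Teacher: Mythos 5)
Your proof is correct, but it follows a different route from the paper's. The paper argues by duality: it constructs, on each $I_n$, the single test function $\Psi|_{I_n}= \del_x w|_{I_n} - (-1)^q \big(\tfrac{\jump{w}_{n+1}}{h_n+h_{n+1}} + \del_x w(x_{n+1}^-)\big)l^n_q$, engineered so that $\int_{S^1} G^-[w]\Psi\,\mathrm{d}x = \norm{w}_{\operatorname{dG}}^2$, and then bounds $\Norm{\Psi}_{\leb{2}}$ by $C(q)\norm{w}_{\operatorname{dG}}$ using the exactness of Gauss--Radau quadrature for degree $2q$; Cauchy--Schwarz and division by $\norm{w}_{\operatorname{dG}}$ finish the argument. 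You instead unpack everything into Legendre coefficients, reusing exactly the identities \eqref{dpi:2} already derived in the proof of Lemma \ref{lem:dpi} (your relations $\tfrac{h_n}{2q+1}g_q^n=-\jump{w}_{n+1}$ and $b_r^n = g_r^n - \tfrac{2r+1}{2q+1}g_q^n$ are \eqref{dpi:1b} and \eqref{dpi:2}$_1$ in disguise), and then estimate the two pieces of the seminorm directly by orthogonality \eqref{Legendre3}. I checked the bookkeeping: the factor $\sum_{r=0}^{q-1}(2r+1)=q^2$, the bound $h_n(g_q^n)^2\le(2q+1)\Norm{G^-[w]}_{\leb{2}(I_n)}^2$, and the jump estimate are all right, and the extra factor $2$ in the definition of $\norm{\cdot}_{\operatorname{dG}}$ only changes the constant. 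What each approach buys: yours is more elementary (no quadrature needed, only Legendre orthogonality), yields an explicit constant $2+\tfrac{2q^2+2}{2q+1}$ up to the factor from the jump weighting, and makes transparent why the estimate is purely local and why only the seminorm (not the full norm) can be controlled; the paper's is shorter once Lemma \ref{lem:dpi} is not assumed as background, and its test-function device generalizes more readily to situations where an explicit orthogonal basis is less convenient. Your closing observation correctly identifies why no Poincar\'e-type step is needed here, in contrast to Lemma \ref{lem:dpi}.
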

\begin{proof}
 Let us use 
 \[ \Psi|_{I_n}= \del_x w|_{I_n} - (-1)^q \qp{\frac{\jump{w}_{n+1}}{h_n+h_{n+1}} + \del_x w(x_{n+1}^-) }l^n_q\]
 in \eqref{dgrad}. Upon noting $\del_x w|_{I_n} \perp l^n_q$ and $\Psi(x_{n+1}^-) = \frac{\jump{w}_{n+1}}{h_n+h_{n+1}}$ we obtain
 \begin{equation}\label{bsp} \int_{S^1} G^-[w] \Psi\d x = \norm{w}_{\operatorname{dG}}^2.\end{equation}
 It remains to determine a bound for $\Norm{\Psi}_{\leb{2}}$.
 Let $\{ y_k\}_{k=0}^{q}$ denote Gauss-Radau points on $[-1,1]$ and $\{ y_k^n\}_{k=0}^{q}$ their image under the map
 \[ \kappa \mapsto \frac{x_n + x_{n+1}}{2} + \kappa \frac{x_{n+1}-x_n}{2}\]
 such that $y^n_0 =x_{n+1}$.
 By $\omega_k$ we denote the weights of Gauss-Radau quadrature.
 Due to the exactness of Gauss-Radau quadrature for polynomials of degree $2q$ and the properties of Legendre polynomials, see Proposition \ref{pro:Legendre}, we find
 \begin{equation}
   \begin{split}
     \Norm{\Psi}_{\leb{2}(I_n)}^2 
     &\leq
     2 \Norm{\del_x w|_{I_n} - (-1)^q \del_x w(x_{n+1}^-) l^n_q}_{\leb{2}(I_n)}  +2  h_n \qp{\frac{\jump{w}_{n+1}}{h_n+h_{n+1}}}^2
     \\
     &\leq 2 \sum_{k=1}^{q}h_n \omega_k (\del_x w(y^n_k) + \del_x w(y^n_0))^2+2  h_n \qp{\frac{\jump{w}_{n+1}}{h_n+h_{n+1}}}^2\\
     &\leq 4 \frac{\sum_{k=1}^{q}\omega_k }{\omega_0} h_n \sum_{k=1}^{q} (\del_x w(y^n_k))^2+ 2  h_n \qp{\frac{\jump{w}_{n+1}}{h_n+h_{n+1}}}^2\\
     &\leq 4 \frac{\sum_{k=1}^{q} \omega_k}{\omega_0} \Norm{\del_x w|_{I_n}}_{\leb{2}(I_n)} + +2  \frac{\qp{\jump{w}_{n+1}}^2}{h_n+h_{n+1}}.
 \end{split}
 \end{equation}
Summing over $n$ implies that 
\begin{equation}\label{boundpsi} \Norm{\Psi}_{\leb{2}}^2 \leq C(q) \norm{w}_{\operatorname{dG}}^2.\end{equation}
Combining \eqref{bsp} and \eqref{boundpsi} gives the desired result, as 
\[  \int_{S^1} G^-[w] \Psi\d x \leq \Norm{G^-[w]}_{\leb{2}}\Norm{\Psi}_{\leb{2}}.\]
\end{proof}

\section{Main result}\label{sec:ee}

This section is devoted to the proof of the main result of this work, which reads as follows:
\begin{theorem}[Reduced relative entropy error estimate]\label{mt}
 Let the exact solution $(u,v)$ of \eqref{thorder} satisfy 
 \begin{equation}
   \begin{split}
     u &\in \cont{1}((0,T) , \sobh{q+2}(S^1)) \cap \cont{0}([0,T] , \cont{q+3}(S^1)) \\
     v &\in \cont{1}((0,T) , \cont{q+2}(S^1)) \cap \cont{0}([0,T] , \cont{q+3}(S^1))
   \end{split}
 \end{equation}
 and let $W \in \cont{q+3}(\rR,[0,\infty)).$
 Then there exists $C>0$ independent of $h,$ but depending on $q,T,\gamma, \Norm{u}_{\leb{\infty}(0,T; \sob{1}{\infty}(S^1))}$ such that
 \begin{equation}
   \begin{split}
   \sup_{0 \leq t \leq T} \bigg(&\Norm{u_h(t,\cdot) - u(t,\cdot)}_{\operatorname{dG}} + \Norm{v_h(t,\cdot) - v(t,\cdot)}_{\leb{2}(S^1)}\bigg) 
   + \qp{\mu \int_0^T \norm{v_h(s,\cdot) - v(s,\cdot)}_{\operatorname{dG}}^2 \d s }^{1/2} \\
   &\leq C h^q\bigg( \Norm{u}_{\leb{\infty}(0,T; \cont{q+3}(S^1))} + \Norm{v}_{\leb{\infty}(0,T; \cont{q+3}(S^1))}+\Norm{\del_t v}_{\leb{\infty}(0,T; \cont{q+2}(S^1))}\bigg) .
 \end{split}
 \end{equation}
\end{theorem}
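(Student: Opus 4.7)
The strategy is to apply the discrete reduced relative entropy estimate Corollary \ref{Gron} with the \emph{projected exact solution} $(\tilde u_h, \tilde v_h, \tilde \tau_h) := (\ritz[u], Q[v], R[\tau])$ playing the role of the perturbed discrete solution. The perturbed-equations lemma guarantees that this tuple satisfies \eqref{sds:proj} with $R_u = 0$, so the hypotheses of Corollary \ref{Gron} are in force: the mean condition \eqref{eq:mean} holds because both $\ritz[u_0]$ and $P_q[u_0]$ have mean zero, and the $\leb{\infty}(0,T;\sob{1}{\infty}(S^1))$ bound on $\ritz[u]$ follows from the third line of \eqref{Riesz:est} combined with the assumed $\cont{q+3}$ regularity of $u$. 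It then remains to estimate $\eta_R(0)$, $\Norm{R_v}_{\leb{2}([0,T]\times S^1)}$, and $\Norm{\mathfrak{h}^{-1}R_\tau}_{\leb{2}([0,T]\times S^1)}$.

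For the initial term, inserting $u_h(0,\cdot) = P_q[u_0]$ and $v_h(0,\cdot) = P_q[v_0]$ and comparing with $(\ritz[u_0], Q[v_0])$ by triangle inequality yields $\eta_R(0) = O(h^{2q})$ from the approximation estimates of $P_q$, $\ritz$ (see \eqref{Riesz:est}) and $Q$ (Lemma \ref{lem:proj1}). The residual $R_\tau = P_q[W'(u) - W'(\ritz[u])]$ is controlled by the Lipschitz continuity of $W'$ on the (uniformly bounded) range of $u$ and $\ritz[u]$, combined with the $\leb{2}$-stability of $P_q$ and the second line of \eqref{Riesz:est}, giving $\Norm{R_\tau}_{\leb{2}} = O(h^{q+1})$, hence $\Norm{\mathfrak{h}^{-1}R_\tau}_{\leb{2}} = O(h^q)$.

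The technically delicate step is bounding $R_v$, which is the sum of contributions coming from the mismatches between the continuous and discrete derivative operators. The term $-P_q[\del_t(v - Q[v])]$ is $O(h^{q+1})$ using linearity of $Q$ applied to $\del_t v$ together with Lemma \ref{lem:proj1}. For $P_q[\del_x\tau] - G^+[R[\tau]]$, I would test against $\Psi \in \rV_q$ and invoke the defining identity of $R[\tau]$ (Definition \ref{def:proj2}) together with the duality \eqref{dual}: this rewrites $\int G^+[R[\tau]]\Psi\d x$ as $-\int W'(u) G^-[\Psi]\d x + \gamma\, a_h^d(\ritz[u], G^-[\Psi])$, which is then compared with $\int \del_x\tau\,\Psi\d x$ after integration by parts; the remaining residue is controlled by the Riesz consistency error via \eqref{Riesz:est}. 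The viscous mismatch $\mu(P_q[\del_{xx}v] - G^+[G^-[Q[v]]])$ is treated analogously, reducing via \eqref{qdef} and \eqref{dual} to the Riesz error $\ritz[\del_x v] - \del_x v$. Summing yields $\Norm{R_v}_{\leb{2}} = O(h^q)$.

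The theorem then assembles by triangle inequality: $\Norm{u_h - u}_{\operatorname{dG}} \leq \Norm{u_h - \ritz[u]}_{\operatorname{dG}} + \Norm{\ritz[u] - u}_{\operatorname{dG}}$, where the first term is bounded by $C\sqrt{\eta_R/\gamma}$ using coercivity \eqref{a:ahd} of $a_h^d$ on mean-zero functions (a property propagated in time from \eqref{eq:mean}), and the second by \eqref{Riesz:est}. The estimate for $\Norm{v_h - v}_{\leb{2}(S^1)}$ splits analogously through $Q[v]$ with Lemma \ref{lem:proj1}. The $\mu$-weighted time integral of $\norm{v_h - v}_{\operatorname{dG}}^2$ is obtained by applying Lemma \ref{coercgm} to the $G^-[v_h - \tilde v_h]$ contribution already sitting inside $\eta_R$, combined once more with Lemma \ref{lem:proj1} for $v - Q[v]$. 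The principal obstacle will be the bookkeeping in the $R_v$ estimate, where cancellations between $P_q[\del_x\,\cdot\,]$ and $G^+$, respectively between $P_q[\del_{xx}\,\cdot\,]$ and $G^+G^-$, must be exposed via the defining identities of the projections in order to recover the optimal order $h^q$ rather than a suboptimal $h^{q-1}$.
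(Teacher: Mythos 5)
Your proposal is correct and follows essentially the same route as the paper: it applies Corollary \ref{Gron} to the projected triple $(\ritz[u],Q[v],R[\tau])$, uses the same three-way decomposition of $R_v$ with the same duality/elementwise-integration-by-parts mechanism (exploiting $R[\tau]=P_q[\tau]$ and $G^-[Q[v]]=\ritz[\del_x v]$) to reach the optimal order $h^q$, and assembles the final estimate by triangle inequality with \eqref{Riesz:est}, Lemma \ref{lem:proj1} and Lemma \ref{coercgm}. The only difference is organizational: the paper factors the core argument through the intermediate Proposition \ref{ee}.
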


Theorem \ref{mt} is a direct consequence of the subsequent proposition,
the estimates \eqref{Riesz:est}$_1$ and \eqref{eq:proj1} and Lemma \ref{coercgm}.
\begin{proposition}[Discrete stability estimate]\label{ee}
  Under the assumptions of Theorem \ref{mt}
  there exists $C>0$ independent of $h,$ but depending on $q,T,\gamma, \Norm{u}_{\leb{\infty}(0,T; \sob{1}{\infty}(S^1))}$ such that
  \begin{equation}
    \begin{split}
      \sup_{0 \leq t \leq T} \bigg(&\Norm{u_h(t,\cdot) - \ritz [u(t,\cdot)]}_{\operatorname{dG}} 
      +
      \Norm{v_h(t,\cdot) - Q[v(t,\cdot)]}_{\leb{2}(S^1)}\bigg)
      +
      \qp{\mu \int_0^T\!\! \norm{v_h(s,\cdot) - Q[v(s,\cdot)]}_{\operatorname{dG}}^2 \d s }^{1/2}
      \\
      &\leq 
      C h^q \bigg( \Norm{u}_{\leb{\infty}(0,T; \cont{q+3}(S^1))} + \Norm{v}_{\leb{\infty}(0,T; \cont{q+3}(S^1))} +\Norm{\del_t v}_{\leb{\infty}(0,T; \cont{q+2}(S^1))} \bigg) .
    \end{split}
  \end{equation}
  
\end{proposition}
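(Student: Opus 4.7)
My plan is to apply the stability estimate of Corollary \ref{Gron} to the pair $(u_h, v_h, \tau_h)$ against the projected exact solution $(\tilde u_h, \tilde v_h, \tilde \tau_h) := (\ritz[u], Q[v], R[\tau])$, which by the preceding lemma satisfies the perturbed scheme \eqref{sds:proj} with $R_u = 0$. The hypotheses of the corollary are readily verified: $\ritz[u]$ is bounded in $\leb{\infty}(0,T;\sob{1}{\infty}(S^1))$ by \eqref{Riesz:est}, and the mean-value condition \eqref{eq:mean} holds since $u_h(0) = P_q[u_0]$ and $\ritz[u_0]$ both inherit the vanishing mean of $u_0$. The residual $R_\tau$ is immediate: Lipschitz continuity of $W'$ on the $L^\infty$-bounded range of $u$ and $\ritz[u]$ combined with \eqref{Riesz:est} gives $\Norm{R_\tau}_{\leb{2}(S^1)} \leq C\Norm{u - \ritz[u]}_{\leb{2}(S^1)} \leq Ch^{q+1}\Norm{u}_{\sobh{q+1}(S^1)}$, hence $\Norm{\mathfrak{h}^{-1}R_\tau}_{\leb{2}} = O(h^q)$.

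For $R_v$ I would first exploit that $v$ satisfies $\del_t v = \del_x \tau + \mu \del_{xx} v$ exactly, so that $P_q$-projecting this identity annihilates three of the five summands in the definition of $R_v$. Combined with $\del_t Q[v] = Q[\del_t v] \in \fes_q$ (on which $P_q$ acts as the identity), this collapses $R_v$ to
\[
  R_v = \del_t Q[v] - G^+[R[\tau]] - \mu G^+[G^-[Q[v]]],
\]
which is precisely the defect of the projected exact solution in the discrete momentum equation. The main task is then to prove, for every $\Psi \in \fes_q$,
\[
  \int_{S^1} R_v \Psi\,\d x \leq C h^{q+1} \Norm{\Psi}_{\operatorname{dG}}.
\]

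I would establish this bound via two structural identities: $R[\tau] = P_q[\tau]$, which follows from Definition \ref{def:proj2} together with \eqref{Riesz:def}, and $G^-[Q[v]] = \ritz[\del_x v]$ from \eqref{qdef}. The duality \eqref{dual} then converts the $G^+$-terms to $-\int P_q[\tau] G^-[\Psi]\,\d x$ and $-\int \ritz[\del_x v] G^-[\Psi]\,\d x$. Unfolding the elementwise definition of $G^-[\Psi]$ and comparing with the integration-by-parts expressions for $\int \del_x\tau\Psi\,\d x$ and $\int \del_{xx}v\Psi\,\d x$, the locality of $P_q$ (which annihilates $\int_{I_n}(\tau - P_q[\tau])\del_x\Psi\,\d x$ since $\del_x\Psi|_{I_n} \in \poly{q-1}$) eliminates the bulk $\tau$-contribution. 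What remains are the jump residuals $\sum_n \jump{\Psi}_n \bigl(\tau(x_n) - P_q[\tau](x_n^-)\bigr)$ and $\sum_n \jump{\Psi}_n \bigl(\del_x v(x_n) - \ritz[\del_x v](x_n^-)\bigr)$, together with a bulk remainder $\int (\del_x v - \ritz[\del_x v])\del_x\Psi\,\d x$ from the viscous pair (the Riesz projection not being $L^2$-orthogonal to lower-order polynomials cellwise). Pointwise and $L^2$ projection estimates, combined with the weighted Cauchy--Schwarz bound $\sum_n |\jump{\Psi}_n| \leq C\Norm{\Psi}_{\operatorname{dG}}$, yield the desired $O(h^{q+1})\Norm{\Psi}_{\operatorname{dG}}$ estimate; the time-derivative piece $\int \del_t(v - Q[v])\Psi\,\d x$ is handled directly by Lemma \ref{lem:proj1} applied to $\del_t v$. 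Taking $\Psi = R_v$ and invoking the inverse inequality $\Norm{R_v}_{\operatorname{dG}} \leq Ch^{-1}\Norm{R_v}_{\leb{2}}$ upgrades this to $\Norm{R_v}_{\leb{2}} = O(h^q)$.

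Finally, the initial entropy $\eta_R(0) \leq Ch^{2q}$ follows from triangle inequalities using \eqref{Riesz:est}, Lemma \ref{lem:proj1} and the stability \eqref{a:ahd} of $a_h^d$. Plugging into \eqref{Gron2} yields $\eta_R(t) \leq C h^{2q}\exp(CT)$. The three pieces of $\eta_R$ then translate into the three norms in the proposition: $\Norm{v_h - Q[v]}_{\leb{2}}$ directly; $\Norm{u_h - \ritz[u]}_{\operatorname{dG}}$ via coercivity \eqref{a:ahd} of $a_h^d$ and the discrete Poincar\'e inequality applied to the mean-zero function $u_h - \ritz[u]$; and $\mu\int_0^T \norm{v_h - Q[v]}_{\operatorname{dG}}^2\,\d s$ via Lemma \ref{coercgm}. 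I expect the algebraic reduction of $R_v$ to a combination of jump residuals and a Riesz bulk remainder to be the technical heart of the argument, as it hinges on the precise interplay between $P_q$, $Q$, $R$, $\ritz$ and the one-sided discrete gradients $G^\pm$.
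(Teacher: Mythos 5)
Your proposal follows essentially the same route as the paper's proof: apply Corollary \ref{Gron} to the projected triple $(\ritz[u],Q[v],R[\tau])$ with $R_u=0$, bound $\Norm{\mathfrak h^{-1}R_\tau}_{\leb 2}$ via \eqref{Riesz:est}, and estimate $R_v$ through duality using the identities $R[\tau]=P_q[\tau]$ and $G^-[Q[v]]=\ritz[\del_x v]$ together with inverse/trace inequalities, before translating $\eta_R$ back into the stated norms via $a_h^d$-coercivity and Lemma \ref{coercgm}. The only differences are cosmetic bookkeeping (you collapse $R_v$ via the PDE and test against dG-normalised $\Psi$, applying the inverse inequality once at the end, whereas the paper splits $R_v$ into three pieces and works with $\leb{2}$-normalised test functions throughout), so the argument is correct and matches the paper.
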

\begin{proof}
  As the subsequent estimates are uniform in time (on $[0,T]$) we omit
  the time dependency.  In order to see that Corollary \ref{Gron} can
  be applied to \eqref{sds:proj} we need $\ritz [u]$ to be bounded in
  $\leb{\infty}(0,T;\sob{1}{\infty}(S^1)).$ This follows from
  \eqref{Riesz:est} and our assumptions on $u.$ In particular, we may
  use the fact that $\Norm{W}_{\sob{3}{\infty}}$ is bounded on
  $[-M,M]$ with $M:= \max\{\Norm{
    \ritz [u]}_{\leb{\infty}},\Norm{u_h}_{\leb{\infty}}\}.$
  
  As we can apply Corollary \ref{Gron} and Lemma \ref{coercgm} it remains to estimate
  $\eta_R(0)$, $\Norm{R_v}_{\leb{2}([0,T]\times S^1)} $ and
  $\Norm{R_\tau}_{\leb{2}([0,T]\times S^1)} $.  It
  holds
 \begin{equation}
   \label{ee1}
   \begin{split}
     \eta_R(0) &\leq \Norm{u_h(0,\cdot) - \ritz [u(0,\cdot)]}_{\operatorname{dG}} + \Norm{v_h(0,\cdot) - Q[v(0,\cdot)]}_{\leb{2}(S^1)}\\
     &\leq C h^{q+1}
     \qp{ \Norm{u_0}_{ \sobh{q+2}(S^1)} + \Norm{v_0}_{\cont{q+2}(S^1)}}
 \end{split}
 \end{equation}
 by the properties of $P_q, \ritz, Q$ and $\cont{q+2}(S^1) \subset \sobh{q+2}(S^1)\subset \cont{q+1}(S^1).$

 As $|W''|$ is bounded on the interval of interest
 \begin{equation}
   \label{ee2} 
   \Norm{R_\tau}_{\leb{2}(S^1)} \leq C \Norm{u - \ritz [u]}_{\leb{2}(S^1)} \leq C h^{q+1} \Norm{u}_{\sobh{q+1}(S^1)}.
 \end{equation}
 To estimate $R_v$ we decompose it as $R_v = -R_v^1 + R_v^2+R_v^3$ with
 \begin{equation}
   \begin{split}
     R_v^1&:= P_q[\del_t ( v - Q[v])],\\
     R_v^2&:=  P_q[\del_x \tau] - G^+[R[\tau]], \\
     R_v^3&:= \mu P_q[\del_{xx} v] - \mu G^+ [G^-[Q[v]]].
   \end{split}
 \end{equation}
 The estimate $\Norm{R_v^1}_{\leb{2}(S^1)}\leq C h^{q+1} \Norm{\del_t
   v}_{\cont{q+2}(S^1)} $ follows from $\del_t Q[v]=Q[\del_t v]$,
 \eqref{eq:proj1}$_3$, the stability of $P_q,$ and our assumptions on
 $v.$ Before we consider $R_v^2$ let us recall $\rU := \{\Psi \in
 \rV_q : \Norm{\Psi}_{\leb{2}(S^1)}=1\}$ and note that
 \[ \Norm{P_q [\tau] - R[\tau]}_{\leb{2}} = \sup_{\Psi \in \rU} \int_{S^1} W'(u) \Psi - \gamma \del_{xx} u \Psi - W'(u)\Psi \d x - a_h^d(\ritz [u],\Psi) =0\]
 by definition of $\ritz [u].$
 As 
 \[\Norm{R [\tau] - \tau}_{\leb{2}(S^1)} = \Norm{P_q [\tau] - \tau}_{\leb{2}(S^1)} \leq C h^{q+1} \Norm{\tau}_{\cont{q+1}(S^1)} \leq C h^{q+1} \Norm{u}_{\cont{q+3}(S^1)} \]
 we find, due to \eqref{dual}, and inverse and trace inequalities, see
 \cite[Lemmas 1.44, 1.46]{dPE12},
 \begin{equation}\label{ee3}
   \begin{split}
     \Norm{P_q[\del_x \tau] - G^+[R[\tau]]}_{\leb{2}} &= \sup_{\Psi \in \rU} \int_{S^1} \del_x \tau \Psi + R[\tau] G^-[\Psi]\d x\\
     &= \sup_{\Psi \in \rU} \sum_{n=0}^{N-1} \qp{ \int_{I_n} (R[\tau] - \tau) \del_x \Psi \d x  + (\tau(x_n) - R[\tau](x_n^-) ) \jump{\Psi}_n}\\
     &\leq \frac{C}{h} \Norm{\tau - R[\tau]}_{\leb{2}(S^1)} \Norm{\Psi}_{\leb{2}(S^1)} \leq C h^q\Norm{u}_{\cont{q+3}(S^1)}.
   \end{split}
 \end{equation}
 Finally we compute, using \eqref{dual}, and inverse and trace inequalities again:
 \begin{equation}\label{ee4}
  \begin{split}
    \Norm{ P_q[\del_{xx} v] - G^+ [G^-[Q[v]]]}_{\leb{2}} 
    &=
    \sup_{\Psi \in \rU} \int_{S^1} \del_{xx} v \Psi + G^-[Q[v]] G^-[\Psi]\d x
    \\
    &=
    \sup_{\Psi \in \rU}\sum_{n=0}^{N-1} \Bigg(  \int_{I_n}\ritz [\del_x v] G^-[\Psi] - \del_x v \del_x \Psi   \d x  + \del_x v(x_n)\jump{\Psi}_n\Bigg)
    \\
    &=
    \sup_{\Psi \in \rU} \sum_{n=0}^{N-1} \Bigg(\int_{I_n} (\ritz [\del_x v] - \del_x v) \del_x \Psi  \d x 
    \\
    &\qquad\qquad\qquad\qquad + \qp{\del_x v(x_n)- \ritz [\del_x v](x_n^-)}\jump{\Psi}_n\Bigg)
    \\
    &=
    \sup_{\Psi \in \rU} \sum_{n=0}^{N-1} \Bigg(\int_{I_n} (\ritz [\del_x v] - S_q^-[\del_x v]) \del_x \Psi  \d x \\
    &\qquad\qquad\qquad + \qp{S_q^-[\del_x v](x_n^-)- \ritz [\del_x v](x_n^-)}\jump{\Psi}_n\Bigg)
    \\
    &\leq
    \sup_{\Psi \in \rU}\frac{C}{h} \Norm{S_q^-[\del_x v] - \ritz [\del_x v]}_{\leb{2}(S^1)}  \Norm{\Psi}_{\leb{2}(S^1)} 
    \\
    &\leq C h^q \Norm{v}_{\cont{q+3}(S^1)}.
  \end{split}
 \end{equation}
 In the last step we used \eqref{eq:proj1}$_1$ and \eqref{Riesz:est}.
 Combining Corollary \ref{Gron} with \eqref{ee1} - \eqref{ee4} we
 obtain the assertion of this Lemma.
\end{proof}

\begin{remark}[Viscosity]
  Note that we need $\mu>0$ only in order to guarantee existence of
  sufficiently regular solutions for small times.  If for $\mu=0$ the
  exact solution is sufficiently regular, all our estimates also hold
  true in this case.
\end{remark}

Using the stability induced by Corollary \ref{cor:Gron3} and the
estimates for the residuals derived in the proof of Theorem \ref{mt}
we have the following estimate with constants independent of $\gamma.$
This result should not be understood as an estimate in the case
$\gamma=0$ but as a uniform estimate in the sharp interface limit case
$\gamma \rightarrow 0.$

\begin{theorem}[Modified entropy error estimate]\label{gamma}
  Let the assumptions of Theorem \ref{mt} be satisfied and let $|W''|$
  be uniformly bounded.  Then, there exists $C>0$ independent of $h,$
  but depending on $q,T,\mu$ such that
  \begin{equation}
    \begin{split}
    &\sup_{0 \leq t \leq T} \bigg(  \Norm{u_h(t,\cdot) - u(t,\cdot)}_{\leb{2}(S^1)}+ \sqrt{\gamma} \norm{u_h(t,\cdot) - u(t,\cdot)}_{\operatorname{dG}} + \Norm{v_h(t,\cdot) - v(t,\cdot)}_{\leb{2}(S^1)}\bigg)\\
    &\qquad + \qp{\mu \int_0^T \norm{v_h(s,\cdot) - v(s,\cdot)}_{\operatorname{dG}}^2 \d s }^{1/2} \\
    & \qquad \qquad \qquad \leq C h^q\bigg( \Norm{u}_{\leb{\infty}(0,T; \cont{q+3}(S^1))} + \Norm{v}_{\leb{\infty}(0,T; \sobh{q+3}(S^1))}+\Norm{\del_t v}_{\leb{\infty}(0,T; \cont{q+2}(S^1))}\bigg) .
    \end{split}
  \end{equation}

\end{theorem}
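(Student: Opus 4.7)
The strategy mirrors the proof of Theorem \ref{mt} (via Proposition \ref{ee}), but replaces the stability estimate of Corollary \ref{Gron} with that of Corollary \ref{cor:Gron3}. The point is that Corollary \ref{cor:Gron3} yields a Gronwall-type bound for the modified relative entropy $\eta_M$ whose constants do not depend on $\gamma$ (at the price of a $1/\mu$ scaling) provided $|W''|$ is uniformly bounded, which is precisely the additional hypothesis of Theorem \ref{gamma}.

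Concretely, I would take $\tilde u_h = \mathfrak{P}[u]$, $\tilde v_h = Q[v]$, $\tilde \tau_h = R[\tau]$, so that the triple solves the perturbed system \eqref{sds:proj} with $R_u=0$ and $R_v,R_\tau$ as in that lemma. Corollary \ref{cor:Gron3} then gives
\begin{equation*}
\eta_M(t) \;\leq\; C\bigl(\eta_M(0) + \|R_v\|_{\leb{2}([0,t]\times S^1)}^2 + \|R_\tau\|_{\leb{2}([0,t]\times S^1)}^2\bigr)\exp(Ct),
\end{equation*}
with $C$ depending on $\mu,T,u_0,v_0,\|W''\|_{\leb{\infty}}$ but not on $\gamma$. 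The three contributions to $\eta_M$ encode, respectively, the $\leb{2}$-control of $u_h-\mathfrak{P}[u]$, the $\leb{2}$-control of $v_h-Q[v]$, the $\sqrt{\gamma}\|\cdot\|_{\operatorname{dG}}$-control of $u_h-\mathfrak{P}[u]$ through coercivity \eqref{a:ahd}, and (together with Lemma \ref{coercgm}) the $\sqrt{\mu}\,L^2_tH^1_x$-control of $v_h-Q[v]$ — exactly the four quantities appearing on the left-hand side of the theorem, modulo projection errors.

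The initial-entropy bound $\eta_M(0) \leq Ch^{2(q+1)}(\cdots)$ follows from the projection estimates for $P_q$, $\mathfrak{P}$ and $Q$ (Lemma \ref{lem:proj1} and \eqref{Riesz:est}); the residual estimates are verbatim those already derived in the proof of Proposition \ref{ee}, namely \eqref{ee2}--\eqref{ee4}. A short triangle-inequality argument combining these with the approximation properties of $\mathfrak{P}$ and $Q$ then converts bounds on $u_h-\mathfrak{P}[u]$ and $v_h-Q[v]$ into bounds on $u_h-u$ and $v_h-v$ in the stated norms, yielding the desired order $h^q$. The main thing to verify — and the step most easily botched — is that the residual bounds \eqref{ee2}--\eqref{ee4} remain uniform in $\gamma$: the only possible source of $\gamma$-dependence is through $\tau = W'(u)-\gamma\partial_{xx}u$ in the estimate of $R_v^2$, but since $\|\tau\|_{\cont{q+1}} \leq \|W'(u)\|_{\cont{q+1}} + \gamma\|u\|_{\cont{q+3}}$ is bounded uniformly as $\gamma\to 0$ (and $\gamma$ is assumed bounded above), no inverse power of $\gamma$ is ever introduced, and the right-hand side acquires the slightly enlarged $\sobh{q+3}$ norm of $v$ just as stated.
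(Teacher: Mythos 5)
Your proposal is correct and follows essentially the same route as the paper, which itself only remarks that Theorem \ref{gamma} follows by combining the stability estimate of Corollary \ref{cor:Gron3} with the residual bounds \eqref{ee2}--\eqref{ee4} already established in the proof of Proposition \ref{ee}. Your additional check that no inverse power of $\gamma$ enters through $\tau = W'(u)-\gamma\partial_{xx}u$ in the bound for $R_v^2$ is a worthwhile verification that the paper leaves implicit.
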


\begin{remark}[Multiple space dimensions]
The only difficulty in extending the analysis presented here to the multi-dimensional version of the problem
investigated in \cite{Gie} is to construct multi-dimensional discrete gradients with one dimensional kernel.
We need this to be able to find a projection of $v$ which is of optimal order.
It should be noted though, that the aforementioned model is physically inadmissible, and 
probably the multi-dimensional model which should be studied in the future is the Navier-Stokes-Korteweg model.
\end{remark}

\section{Numerical experiments}
\label{sec:num}
In this section we conduct some numerical benchmarking.

\begin{definition}[Estimated order of convergence]
  \label{def:EOC}
  Given two sequences $a(i)$ and $h(i)\downto0$,
  we define estimated order of convergence
  (\EOC) to be the local slope of the $\log a(i)$ vs. $\log h(i)$
  curve, i.e.,
  \begin{equation}
    \EOC(a,h;i):=\frac{ \log(a(i+1)/a(i)) }{ \log(h(i+1)/h(i)) }.
  \end{equation}
\end{definition}

In this test we benchmark the numerical algorithm presented in
\S\ref{sec:ns} against a steady state solution of the
regularised elastodynamics system
(\ref{thorder}) on the domain $\W = [-1,1]$.

We take the double well 
\begin{equation}
  W(u) := \qp{u^2 - 1}^2,
\end{equation}
then a steady state solution to the regularised elastodynamics system
is given by
\begin{gather}
  u(t,x) 
  =
  \tanh\qp{ x \sqrt{\frac{2}{\gamma}}},
  \qquad v(t,x) \equiv 0 \Foreach t.
\end{gather}
For the implementation we are using natural boundary conditions, that is
\begin{equation}
  \partial_x u_h = v_h = 0 \text{ on } [0,T) \times \partial\W,
\end{equation}
rather than periodic.
Tables
\ref{table:p1-gamma-10-3}--\ref{table:p3-gamma-10-3} detail three
experiments aimed at testing the convergence properties for the scheme
using piecewise discontinuous elements of various orders ($p=1$ in
Table \ref{table:p1-gamma-10-3}, $p=2$ in Table
\ref{table:p2-gamma-10-3} and $p=3$ in Table
\ref{table:p3-gamma-10-3}).

\begin{table}[ht]
  \caption{\label{table:p1-gamma-10-3} In this test we benchmark a
    stationary solution of the regularised elastodynamics system using the discretisation
    (\ref{sds}) with piecewise linear elements ($p =
    1$), choosing $k = h^2$. The temporal discretisation is a $2$nd order Crank--Nicolson method. We look at the $\leb{\infty}(0,T; \leb{2}(\W))$ errors
    of the discrete variables $u_h \AND v_h$, the $\leb{\infty}(0,T; dG)$ error of $u_h$ and the $\leb{2}(0,T; dG)$ error of $v_h$. We use $e_u := u - u_h \AND e_v := v - v_h$. In this test we choose
    $\gamma = \mu = 10^{-3}$. We show the rates of convergence for each of the components of the reduced relative and modified entropy error. Notice the leading order terms in the reduced relative entropy error and the modified entropy error converge with the rates in Theorems \ref{mt} and \ref{gamma} respectively.}
  \begin{center}
    \begin{tabular}{c|c|c|c|c|c|c|c|c}
      $N$ & $\Norm{e_u}_{\leb{\infty}(\leb{2})}$ & EOC & $\Norm{e_u}_{\leb{\infty}(dG)}$ & EOC & $\Norm{e_v}_{\leb{\infty}(\leb{2})}$ & EOC & $\Norm{e_v}_{\leb{2}(dG)}$ & EOC \\
      \hline
      16 & 3.033825e-01 & 0.000 & 4.413617e+00 & 0.000 & 2.103556e-01 & 0.000 & 1.928219e+00 & 0.000
      \\
      32 & 2.024675e-01 & 0.583 & 5.051696e+00 & -0.195 & 1.287003e-01 & 0.709 & 1.679159e+00 & 0.200
      \\
      64 & 9.293951e-03 & 4.445 & 3.379746e-01 & 3.902 & 1.392056e-02 & 3.209 & 8.192812e-01 & 1.035
      \\
      128 & 3.226365e-03 & 1.526 & 1.517014e-01 & 1.156 & 4.672567e-03 & 1.575 & 4.290682e-01 & 0.933
      \\
      256 & 1.022094e-03 & 1.658 & 4.636069e-02 & 1.710 & 1.358856e-03 & 1.782 & 2.026073e-01 & 1.083
      \\
      512 & 2.124393e-04 & 2.266 & 9.988999e-03 & 2.215 & 3.129043e-04 & 2.119 & 9.814742e-02 & 1.046
      \\
      1024 & 5.332873e-05 & 1.994 & 2.462207e-03 & 2.020 & 7.765626e-05 & 2.011 & 4.832915e-02 & 1.022
    \end{tabular}
  \end{center}
\end{table}

\begin{table}[ht]
  \caption{\label{table:p2-gamma-10-3} The test is the same as in
    Table \ref{table:p1-gamma-10-3} with the exception that
    we take $p=2$. Notice the leading order terms in the reduced relative entropy error and the modified entropy error converge with the rates in Theorems \ref{mt} and \ref{gamma} respectively.}
  \begin{center}
    \begin{tabular}{c|c|c|c|c|c|c|c|c}
      $N$ & $\Norm{e_u}_{\leb{\infty}(\leb{2})}$ & EOC & $\Norm{e_u}_{\leb{\infty}(dG)}$ & EOC & $\Norm{e_v}_{\leb{\infty}(\leb{2})}$ & EOC & $\Norm{e_v}_{\leb{2}(dG)}$ & EOC \\
      \hline
      16 & 1.582736e-01 & 0.000 & 4.357875e+00 & 0.000 & 8.843701e-02 & 0.000 & 5.622669e-01 & 0.000
      \\
      32 & 1.452531e-02 & 3.446 & 5.367621e-01 & 3.021 & 2.016238e-02 & 2.133 & 1.686844e-01 & 1.737
      \\
      64 & 1.447604e-03 & 3.327 & 1.551374e-01 & 1.791 & 2.482052e-03 & 3.022 & 4.731776e-02 & 1.834
      \\
      128 & 9.269265e-05 & 3.965 & 1.873093e-02 & 3.050 & 4.237385e-04 & 2.550 & 1.427457e-02 & 1.729
      \\
      256 & 7.884262e-06 & 3.555 & 3.723996e-03 & 2.331 & 1.174188e-04 & 1.852 & 3.251624e-03 & 2.001
      \\
      512 & 7.830533e-07 & 3.332 & 8.264272e-04 & 2.172 & 3.262314e-05 & 1.848 & 8.153683e-04 & 1.996
      \\
      1024 & 9.110337e-08 & 3.104 & 2.069450e-04 & 1.998 & 8.486964e-06 & 1.943 & 1.977015e-04 & 2.044
    \end{tabular}
  \end{center}
\end{table}

\begin{table}[ht]
  \caption{\label{table:p3-gamma-10-3} The test is the same as in
    Table \ref{table:p1-gamma-10-3} with the exception that
    we take $p=3$. Notice the leading order terms in the reduced relative entropy error and the modified entropy error converge with the rates in Theorems \ref{mt} and \ref{gamma} respectively.}
  \begin{center}
    \begin{tabular}{c|c|c|c|c|c|c|c|c}
      $N$ & $\Norm{e_u}_{\leb{\infty}(\leb{2})}$ & EOC & $\Norm{e_u}_{\leb{\infty}(dG)}$ & EOC & $\Norm{e_v}_{\leb{\infty}(\leb{2})}$ & EOC & $\Norm{e_v}_{\leb{2}(dG)}$ & EOC \\
      \hline
      16 & 8.127264e-03 & 0.000 & 4.424734e-01 & 0.000 & 1.121668e-02 & 0.000 & 1.469452e-01 & 0.000
      \\
      32 & 4.382422e-03 & 0.891 & 4.035348e-01 & 0.133 & 6.529868e-03 & 0.781 & 1.680196e-01 & -0.193
      \\
      64 & 7.923654e-04 & 2.468 & 7.208921e-02 & 2.485 & 1.112353e-03 & 2.553 & 2.923226e-02 & 2.523
      \\
      128 & 5.081122e-05 & 3.963 & 1.017129e-02 & 2.825 & 1.447472e-04 & 2.942 & 4.334565e-03 & 2.754
      \\
      256 & 2.407321e-06 & 4.400 & 1.270398e-03 & 3.001 & 1.819700e-05 & 2.992 & 5.623497e-04 & 2.946
      \\
      512 & 1.452940e-07 & 4.050 & 1.577331e-04 & 3.010 & 2.338797e-06 & 2.960 & 7.027682e-05 & 3.000
      \\
      1024 & 9.0432415-09 & 4.006 & 1.951425e-05 & 3.015 & 2.936765e-07 & 2.994 & 8.835729e-06 & 2.992
    \end{tabular}
  \end{center}
\end{table}

\bibliographystyle{spmpsci}
\bibliography{nskbib,tristanswritings,tristansbib}


\end{document}